\markboth {\rm } {\rm Steady Vortex Patch Solutions to the  Vortex-Wave System}
\numberwithin{equation}{section}
\newtheorem{theorem}{Theorem}[section]
\newtheorem{proposition}[theorem]{Proposition}
\newtheorem{lemma}[theorem]{Lemma}
\theoremstyle{definition}
\newtheorem{definition}[theorem]{Definition}
\theoremstyle{remark}
\newtheorem{remark}[theorem]{Remark}
\begin{document}
\begin{frontmatter}

\title{Steady Vortex Patch Solutions to the Vortex-Wave System
}

\author{Daomin Cao}
\ead{dmcao@amt.ac.cn}
\author{Guodong Wang}

\ead{wangguodong14@mails.ucas.ac.cn}

\address{}

\begin{abstract}
The vortex-wave system describes the motion of a two-dimensional ideal fluid in which the vorticity includes continuously distributed vorticity, which is called the background vorticity, and a finite number of concentrated vortices. In this paper we restrict ourselves to the case of a single point vortex in bounded domains. We prove the existence of steady vortex patch solutions to this system with prescribed distribution for the background vorticity. Moreover, we show that the supports of these solutions ``shrink'' to a minimum point of the Kirchhoff-Routh function as the strength parameter of the background vorticity goes to infinity.
\end{abstract}
\begin{keyword}
Vortex-wave system, Vortex patch, Euler equation, Desingularization, Kirchhoff-Routh function, Maximization
\end{keyword}
\end{frontmatter}



\section{Introduction}
The vortex-wave system was firstly introduced by Marchioro and Pulvirenti in \cite{MPu2} to describe the motion of a planar ideal fluid in which the vorticity consists of continuously distributed vorticity(wave part) and $k$ concentrated vortices(vortex part). In the whole plane the system can be written as follows:
\begin{equation}\label{11}
\begin{cases}
 \partial_t{\omega}+\mathbf{u}\cdot\nabla\omega=0,
 \\ \frac{dx_i}{dt}=J\nabla \Gamma*\omega(x_i,t)+\sum_{j\neq i}\kappa_jJ\nabla\Gamma(x_i-x_j),\,\,i=1,\cdot\cdot\cdot,k,
  \\ \mathbf{u}=J\nabla \Gamma*\omega+\sum_{j=1}^k\kappa_jJ\nabla \Gamma(\cdot-x_j),
\end{cases}
\end{equation}
where $\Gamma(x)=-\frac{1}{2\pi}\ln|x|$ is the fundamental solution of $-\Delta$ in $\mathbb{R}^2$, $J(x_1,x_2)=(x_2,-x_1)$ denotes clockwise rotation through $\frac{\pi}{2}$, and $\Gamma*\omega$ is the Newton potential of $\omega$ defined by
\begin{equation}\label{12}
\Gamma*\omega(x,t)=-\frac{1}{2\pi}\int_{\mathbb{R}^2}\ln|x-y|\omega(y,t)dy.
\end{equation}

Let us explain system \eqref{11} briefly. The first equation is a transport equation for the background vorticity $\omega(x,t)$, which means that the background vorticity is transported by the velocity ``generated'' by itself(the term $J\nabla \Gamma*\omega$), and $k$ point vortices(the term $\sum_{j=1}^k\kappa_jJ\nabla \Gamma(\cdot-x_j)$). The second equation means that the evolution of each vortex $x_i(t)$ is influenced by the velocity ``generated'' by the background vorticity(the term $J\nabla \Gamma*\omega(x_i,t)$) and the other $k-1$ vortices(the term $\sum_{j\neq i}\kappa_jJ\nabla\Gamma(x_i-x_j)$). If $\kappa_i=0$, $i=1,\cdot\cdot\cdot,k$, then the system reduces to the vorticity form of the Euler equation, which has been extensively studied, see\cite{MB,MPu,Y} for example. If the background vorticity vanishes, then the system becomes the Kirchhoff-Routh equation, which is a model to describe the motion of $k$ concentrated vortices, see \cite{L,MPa,T3} for example.

The existence and uniqueness to the non-stationary vortex-wave system in the whole plane have been extensively studied over the past decades, see \cite{B,CLMN,LM,LM2,MPu2,Mi} for example. However, as far as we know, little work has been done in steady solutions to this system. Our purpose here is to construct steady vortex patch solutions in the case of a single vortex. More precisely, we will prove that for any vortex patch rearrangement class $\mathcal{N}^\mu$ defined by
\[
\mathcal{N}^\mu=\{\omega\in L^\infty(D)\,\,|\,\,\omega=\mu I_A, A\subset D, \mu|A|=1\},
\]
where $\mu$ is the vorticity strength parameter, there exists a steady solution to the vortex-wave system, say $(\omega^\mu,x^\mu)$, satisfying $\omega^\mu\in\mathcal{N}^\mu$. Moreover, as the strength parameter $\mu$ goes to infinity, both $supp(\omega^\mu)$ and $x^\mu$  "shrink" to a minimum point of the Kirchhoff-Routh function.

The basic idea to prove the existence of $(\omega^\mu,x^\mu)$ for fixed $\mu$ is to construct a family of steady vortex patch solutions to the Euler equation, in which one part of the vorticity belongs to the rearrangement class $\mathcal{N}^\mu$ while the other part ``shrinks'' to a point, then we show the limit is in fact a steady solution to the vortex-wave system. We will use the result of Burton \cite{B4} on maximization of convex functionals on rearrangement class to obtain approximate solutions, while the proof of the convergence is based on the idea of Turkington \cite{T}.

It is worth mentioning that our result is closely related to the desingularization of point vortices for the Euler equation, which has been studied by many authors, see \cite{B,CPY,EM2,M,MPa,T3,W} for example. Roughly speaking, desingularization of vortices for the Euler equation is to justify the Kirchhoff-Routh equation by approximation of the classical Euler equation. There are mainly two kinds of desingularization in the literature: the first kind is to consider a family of initial vorticity, which is sufficiently concentrated in $k$ small regions, then the evolved vorticity according the Euler equation is also concentrated in $k$ small regions for all time, and the limiting positions of these small regions can be approximated by the Kirchhoff-Routh equation, see \cite{M,MPa,T3} and the references therein; the second kind is to construct a sequence of steady solutions to the Euler equation that ``shrinks'' to a critical point of the Kirchhoff-Routh function(or equivalently, a stationary solution to the Kirchhoff-Routh equation), see \cite{CPY,SV,T,W} for example.

Analogously, it is natural to consider the desingularization for the vortex-wave system. In \cite{B}, the author considered the first kind of desingularization, i.e., given a sequence of initial vorticity which is the sum of a given background vorticity and a concentrated vorticity ``blob'', it was proved that the sequence of the evolved solutions according to the Euler equation converges to the vortex-wave system in some sense. In contrast to \cite{B}, in this paper we are concerned with the the second kind of desingularization, i.e., we construct a family of steady Euler solutions in which one part of the vorticity belongs to a given rearrangement class while the support of other part ``shrinks'' to a point, and the limit is exactly a steady solution to the vortex-wave system.

We end this section by giving outline of this paper. In Section 2, we introduce the vortex-wave system in bounded domains and state our main results. Then we devote Section 3 to the construction of approximate solutions by solving a certain variational problem. In Section 4 by comparing energy we show that the limit of approximate solutions is in fact a steady vortex patch solution to the vortex-wave system. In Section 5 we consider the limit of the steady vortex solutions obtained in Section 4 as the strength of the background vorticity goes to infinity.

\section{Main Results}
\subsection{Notations}

Let $D\subset\mathbb{R}^2$ be a bounded and simply-connected domain with smooth boundary. The Green's function for $-\Delta$ in $D$ with zero Dirichlet boundary condition is written as
\begin{equation}\label{21}
G(x,y)=\frac{1}{2\pi}\ln \frac1{|x-y|}-h(x,y), \,\,\,x,y\in
D,
\end{equation}
where $h(x,y)$ is the regular part of $G$. Note that $h(\cdot,\cdot)$ is bounded from below in $D\times D$. The Kirchhoff-Routh function of $D$ is defined to be
\begin{equation}\label{22}
H(x)=\frac{1}{2}h(x,x),\,\,\,x\in D,
\end{equation}
 and $\lim_{x\rightarrow\partial D}H(x)=+\infty$, see \cite{T3}, Lemma 2.2 for example. $2H$ is also called Robin function.

We shall use the following notations throughout this paper: $J(a,b)=(b,-a)$ denotes clockwise rotation through $\frac{\pi}{2}$ for any vector $(a,b)\in \mathbb{R}^2$, $|A|$ denotes the two-dimensional Lebesgue measure for any measurable set $A\subset \mathbb{R}^2$, $\overline{A}$ denotes the closure of some set $A\subset \mathbb{R}^2$ in the Euclidean topology, and $I_A$ denotes the characteristic function of some planar set $A$, that is, $I_A(x)=1$ if $x\in A$, $I_A(x)=0$ elsewhere.  $supp(g)$ denotes the support of some function $g$, that is,
\begin{equation}
supp(g)=\overline{\{x\,|\,g(x)\neq0\}}.
\end{equation}
$dist(\cdot,\cdot)$ denotes the distance between two sets,
\begin{equation}
dist(A,B)=\inf_{x\in A,y\in B}|x-y|.
\end{equation}
For a given measurable function $g$ on $D$, the rearrangement class of $g$ is defined by
\begin{equation}
\mathcal{R}(g)=\{f \text{ is measurable}\,\,|\,\,\text{for any}\,a\in \mathbb{R},\,\,|\{f>a\}|=|\{g>a\}| \}.
\end{equation}

For any $\omega\in L^\infty(D)$, we also define the stream function of $\omega$ by
\begin{equation}
G*\omega(x)=\int_DG(x,y)\omega(y)dy.
\end{equation}
Note that since $\omega\in L^p(D)$ for any $p\in[1,+\infty]$, by $L^p$ estimate and Sobolev embedding $G*\omega\in W^{2,p}(D)\cap C^{1,\alpha}(\overline{D})$ for any $p\in[1,+\infty)$ and $\alpha\in(0,1)$.
\subsection{The vortex-wave system in bounded domains}
We begin with a discussion on the Euler equation describing an ideal fluid with unit density moving in $D$,
\begin{equation}\label{280}
\begin{cases}
 \partial_t\mathbf{u}+(\mathbf{u}\cdot\nabla)\mathbf{u}=-\nabla P,\\
 \nabla\cdot\mathbf{u}=0,\\
 \mathbf{u}\cdot\mathbf{n}|_{\partial D}=0,
 \end{cases}
\end{equation}
where $\mathbf{u}=(u_1,u_2)$ is the velocity field, $P$ is the pressure, and $\mathbf{n}$ is the outward unit normal. Here we impose the impermeability boundary condition.

Define the vorticity $\omega=\partial_1 u_2-\partial_2u_1$. Since $D$ is simply connected, $\mathbf{u}$ can be  uniquely determined by $\omega$,
\begin{equation}\mathbf{u}=J\nabla G*\omega,
\end{equation}
see \cite{MPu}, \S1.2 for example. So it suffices to consider the equation satisfied by $\omega$.
Using the identity $\frac{1}{2}\nabla|\mathbf{u}|^2=(\mathbf{u}\cdot\nabla)\mathbf{u}+\omega J\mathbf{u}$, the first equation of $\eqref{280}$ becomes
\begin{equation}\label{281}
 \mathbf{u}_t+\nabla(\frac{1}{2}|\mathbf{u}|^2+P)-\omega J\mathbf{u}=0,
\end{equation}
Taking the curl on both sides we obtain the vorticity form of the Euler equation

\begin{equation}\label{282}
 \omega_t+\mathbf{u}\cdot\nabla\omega=0,
\end{equation}
 which means that the vorticity is transport by the velocity $\mathbf{u}$, where $\mathbf{u}$ is "generated" by $\omega$, i.e., $\mathbf{u}=J\nabla G*\omega$.

When the vorticity is sufficiently concentrated at $k$ points, equation \eqref{282} is approximated by the following Kirchhoff-Routh equation:
\begin{equation}\label{283}
\frac{dx_i}{dt}=\sum_{j=1,j\neq i}^ka_jJ\nabla_{x_i}G(x_i,x_j)-a_iJ\nabla H(x_i),\,\,i=1,\cdot\cdot\cdot,k,
\end{equation}
where $x_i(t)$ represents the position of the $i$-th vortex, and $a_i$ is the corresponding vorticity strength. Equation \eqref{283} means that each vortex interacts with the others via the term $a_jJ\nabla_{x_i}G(x_i,x_j)$ and with the boundary via the term $-a_iJ\nabla H(x_i)$. The approximation from the Euler equation to the Kirchhoff-Routh equation has been extensively studied, see \cite{CPY,EM2,M,MPa,T3} and the references therein.

Now we combine the Euler equation and the Kirchhoff-Routh equation together, that is, we assume that the vorticity consists of both continuously distributed vorticity denoted by $\omega(x,t)$ and $k$ concentrated vortices $x_i(t)$, $i=1,\cdot\cdot\cdot,k$. Then it is reasonable that the evolution of $\omega(x,t)$ and $x_i(t)$ obey the following system:
\begin{equation}\label{23}
\begin{cases}
 \partial_t{\omega}+\mathbf{u}\cdot\nabla\omega=0,
 \\ \frac{dx_i}{dt}=J\nabla G*\omega(x_i,t)+\sum_{j=1,j\neq i}^ka_jJ\nabla_{x_i}G(x_i,x_j)-a_iJ\nabla H(x_i),
  \\ \mathbf{u}=J\nabla G*\omega+\sum_{j=1}^ka_jJ\nabla G(x_j,\cdot),
\end{cases}
\end{equation}
which we call the vortex-wave system in bounded domains.

Let us explain \eqref{23} briefly. The first equation in \eqref{23} means that evolution of the background vorticity $\omega(x,t)$ is influenced by the velocity field $J\nabla G*\omega$ ``generated'' by itself and the velocity field $\sum_{j=1}^ka_jJ\nabla G(x_j,\cdot)$ ``generated'' by the $k$ point vortices with strength $a_i$, and the evolution of each $x_i(t)$ is influenced by the velocity field $J\nabla G*\omega(x_i,t)$ ``generated'' by $\omega$ and the velocity $\sum_{j=1,j\neq i}^ka_jJ\nabla_{x_i}G(x_i,x_j)$ ``generated'' by the other $k-1$ point vortices together with the boundary term $-a_i J\nabla H(x)$. If $\omega\equiv 0$, then \eqref{23} is exactly the Kirchhoff-Routh equation; if $a_i=0$, $i=1,\cdots,k$, then $\eqref{23}$ becomes the vorticity form of the Euler equation.

\subsection{Main results}

In the rest of this paper we will restrict ourselves to the stationary vortex-wave system with a single point vortex(that is $k=1$), and  we assume that the point vortex has unit strength for simplicity (namely $a_1=1$).

More precisely, we will consider the following system:
\begin{equation}\label{24}
\begin{cases}
J\nabla (G*\omega+ G(x,\cdot))\cdot\nabla\omega=0,
 \\ \nabla G*\omega(x)- \nabla H(x)=0.
\end{cases}
\end{equation}

Since we are going to deal with vortex patch solutions which are discontinuous, it is necessary to introduce the weak formulation for the first equation in \eqref{24}. To motivate the definition, let us assume that $\omega$ is a smooth solution, then for any $\phi\in C_c^\infty(D)$,

\begin{equation}
\int_D\phi J\nabla (G*\omega+ G(x,\cdot))\cdot\nabla\omega dy=0.
\end{equation}
Now we claim that
\begin{equation}\label{2018}
\int_D\phi J\nabla (G*\omega+ G(x,\cdot))\cdot\nabla\omega dy=-\int_D\omega J\nabla(G*\omega+ G(x,\cdot))\cdot \nabla\phi dy.
\end{equation}
In fact, by the divergence theorem
\begin{equation}
\begin{split}
\int_D\phi J\nabla G*\omega\cdot\nabla\omega dy=&\int_D\phi\, div(\omega J\nabla G*\omega)dy\\
=&\int_D\,div(\phi\omega J\nabla G*\omega)dy-\int_D\omega J\nabla G*\omega\cdot\nabla\phi dy\\
=&\int_{\partial D}\phi\omega J\nabla G*\omega\cdot\mathbf{n}dS-\int_D\omega J\nabla G*\omega\cdot\nabla\phi dy\\
=&-\int_D\omega J\nabla G*\omega\cdot\nabla\phi dy,
\end{split}
\end{equation}
where we use the fact that $J\nabla G*\omega$ is a divergence-free vector field. To calculate the integral $\int_D\phi J\nabla G(x,\cdot)\cdot\nabla\omega dy$, the singularity of $\nabla G$ need to to be dealt with. To this end define $\Omega^a=\{y\in D\,|\,G(x,y)>a\}$
and $D^a=D\setminus \overline{\Omega^a}$. By the implicit function theorem, $\Omega^a$ is a simply connected domain with smooth boundary if $a>0$ is sufficiently large. Again by the divergence theorem
\begin{equation}
\begin{split}
\int_{D^a}\phi J\nabla G(x,\cdot)\cdot\nabla\omega dy=&\int_{D^a}\phi\, div(\omega J\nabla G(x,\cdot))dy\\
=&\int_{D^a}\,div(\phi\omega J\nabla G(x,\cdot))dy-\int_{D^a}\omega J\nabla G(x,\cdot)\cdot\nabla\phi dy\\
=&\int_{\partial {D^a}}\phi\omega J\nabla G(x,\cdot)\cdot\mathbf{n}dS-\int_{D^a}\omega J\nabla G(x,\cdot)\cdot\nabla\phi dy\\
=&-\int_{D^a}\omega J\nabla G(x,\cdot)\cdot\nabla\phi dy.
\end{split}
\end{equation}
On the other hand, by Lebesgue's dominated convergence theorem(notice that $\nabla G(x,\cdot)\in L^1(D)$) we have
\begin{equation}
\lim_{a\rightarrow+\infty}\int_{D^a}\phi J\nabla G(x,\cdot)\cdot\nabla\omega dy=\int_{D}\phi J\nabla G(x,\cdot)\cdot\nabla\omega dy,
\end{equation}
and
\begin{equation}
\lim_{a\rightarrow+\infty}\int_{D^a}\omega J\nabla G(x,\cdot)\cdot\nabla\phi dy=\int_{D}\omega J\nabla G(x,\cdot)\cdot\nabla\phi dy.
\end{equation}
Taking the limit we obtain
\begin{equation}
\int_{D}\phi J\nabla G(x,\cdot)\cdot\nabla\omega dy=-\int_{D}\omega J\nabla G(x,\cdot)\cdot\nabla\phi dy.
\end{equation}
Hence we have proved \eqref{2018}.
In conclusion, if $\omega$ is a smooth solution to the system \eqref{24}, then it must satisfy
\begin{equation}\label{201}
\int_D\omega J\nabla(G*\omega+ G(x,\cdot))\cdot \nabla\phi dy=0.
\end{equation}

Notice that the integral in \eqref{201} makes sense for any $\omega\in L^\infty(D)$ since $G*\omega\in C^{1}(\overline{D})$ and $\nabla G(x,\cdot)\in L^1(D)$, so we have the following definition:

\begin{definition}
$(\omega,x)$ is called a weak solution to \eqref{24} if $\omega\in L^\infty(D), x\in D$ and
\begin{equation}\label{25}
\begin{cases}
\int_D\omega J\nabla(G*\omega+ G(x,\cdot))\cdot \nabla\phi dy=0, \,\,\forall \phi\in C_c^\infty(D)\\
\,\\
\nabla G*\omega(x)- \nabla H(x)=0.
\end{cases}
\end{equation}
\end{definition}

In this paper we are mainly interested in the vortex patch solution of \eqref{24}, i.e., the solution $(\omega, x)$ such that $\omega$ is of the form $\omega=a I_A$, where $a$ is a real number representing the strength of $\omega$ and $A\subset D$ is a Lebesgue measurable set.

The main result of this paper is as follows:

\begin{theorem}\label{29}
Let $\mu$ be a positive real number satisfying $\mu>\frac{1}{|D|}$, and $\mathcal{N}^\mu$ be a rearrangement class defined by
\begin{equation}\label{210}
\mathcal{N}^\mu=\{\omega\in L^\infty(D)\,\,|\,\,\omega=\mu I_A, A \,\text{is a measurable set in}\,D, \mu|A|=1\}.
\end{equation}
Then there exist $\omega^\mu\in \mathcal{N}^\mu$ and $x^\mu\in D$ such that $(\omega^\mu,x^\mu)$ is a weak solution to the stationary vortex-wave system \eqref{24}, moreover, $\omega^\mu$ has the form
\begin{equation}\label{211}
\omega^\mu=\mu I_{\{G*\omega^\mu+G(x^\mu,\cdot)>b^\mu\}}
\end{equation}
for some $b^\mu>0$.
\end{theorem}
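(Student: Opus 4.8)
\emph{Proof proposal.} The plan is to obtain $(\omega^\mu,x^\mu)$ as a limit of steady vortex patch solutions of the Euler equation whose vorticity is the sum of a ``background'' piece lying in $\mathcal N^\mu$ and a unit mass piece whose support shrinks to a point, this point playing the role of the vortex; this is the second kind of desingularization described in the introduction. The approximate solutions are produced by Burton's maximization method, and the convergence follows the energy comparison idea of Turkington.

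First I would set up the variational problem. Fix $\varepsilon\in(0,1)$ and reference sets $A_0,B_0\subset D$ with $\mu|A_0|=1$, $|B_0|=\varepsilon^2$, and maximize the kinetic energy $E(\omega)=\tfrac12\int_D\omega\,G*\omega\,dy$ over all $\omega=\omega_1+\omega_2$ with $(\omega_1,\omega_2)\in\mathcal N^\mu\times\mathcal R(\varepsilon^{-2}I_{B_0})$. Since $E$ is strictly convex and weakly continuous on this bounded set, Burton's theorem \cite{B4} yields a maximizer $\omega^\varepsilon=\omega_1^\varepsilon+\omega_2^\varepsilon$ which, writing $\psi^\varepsilon=G*\omega^\varepsilon$, has the form $\omega_1^\varepsilon=\mu\,I_{\{\psi^\varepsilon>\alpha_\varepsilon\}}$, $\omega_2^\varepsilon=\varepsilon^{-2}I_{\{\psi^\varepsilon>\beta_\varepsilon\}}$ with $\alpha_\varepsilon<\beta_\varepsilon$, the two level sets having measures $1/\mu$ and $\varepsilon^2$. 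Being a monotone function of its own stream function, $\omega^\varepsilon$ is a weak steady Euler flow, $\int_D\omega^\varepsilon J\nabla\psi^\varepsilon\cdot\nabla\phi\,dy=0$ for all $\phi\in C_c^\infty(D)$. I would also record a second identity: once we know (next paragraph) that $\mathrm{supp}\,\omega_2^\varepsilon$ lies at positive distance from $\partial D$, the translates $\omega_2^\varepsilon(\cdot-t)$ remain in $\mathcal R(\varepsilon^{-2}I_{B_0})$ for $|t|$ small, so $t\mapsto E(\omega_1^\varepsilon+\omega_2^\varepsilon(\cdot-t))$ is smooth near $0$ and maximal at $t=0$; differentiating and using that the singular parts of $\nabla_1 G$ and $\nabla_2 G$ cancel, one gets
\[
\int_D\omega_2^\varepsilon\,\nabla\big(G*\omega_1^\varepsilon\big)\,dy=\int_D\int_D\omega_2^\varepsilon(u)\,\omega_2^\varepsilon(v)\,\nabla_u h(u,v)\,du\,dv,
\]
which I will call $(\ast)$.

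The crux is the behaviour of $\omega^\varepsilon$ as $\varepsilon\to0$. Using $0\le G(x,y)\le\tfrac1{2\pi}\ln\tfrac1{|x-y|}+C$, rearrangement inequalities, and crucially that $G*\omega_1^\varepsilon$ is bounded uniformly in $\varepsilon$ because $\mu$ is fixed, one shows $E(\omega^\varepsilon)=\tfrac1{4\pi}\ln\tfrac1\varepsilon+O(1)$ and $\int_D\omega_2^\varepsilon\,G*\omega_2^\varepsilon\,dy=\tfrac1{2\pi}\ln\tfrac1\varepsilon+O(1)$, the lower bounds coming from testing with a fixed interior background patch together with a round vortex core. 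Thus the self-interaction of $\omega_2^\varepsilon$ is within $O(1)$ of its maximum over all unit mass densities of amplitude $\varepsilon^{-2}$; a quantitative form of the Riesz rearrangement inequality then forces the \emph{mass} of $\omega_2^\varepsilon$ to concentrate: there are $x^\varepsilon\in D$ with $\int_{D\setminus B_r(x^\varepsilon)}\omega_2^\varepsilon\,dy\to0$ for every $r>0$; and since otherwise the term $-\int_D\int_D\omega_2^\varepsilon\omega_2^\varepsilon h$ would drive $\int_D\omega_2^\varepsilon G*\omega_2^\varepsilon\,dy$ to $-\infty$ (recall $h(x,x)\to+\infty$ at $\partial D$), the $x^\varepsilon$ stay in a fixed compact subset of $D$. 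Passing to a subsequence, $x^\varepsilon\to x^\mu\in D$, $\omega_2^\varepsilon\rightharpoonup\delta_{x^\mu}$ as measures, $\omega_1^\varepsilon\rightharpoonup\omega^\mu$ weakly-$*$ in $L^\infty(D)$, and $\alpha_\varepsilon\to b^\mu$ for some $b^\mu\in\R$ (bounded, again because $G*\omega_1^\varepsilon$ is bounded). I expect this concentration step — matching the leading-order energy asymptotics, converting near-optimality of the self-interaction into quantitative mass concentration, and keeping the core away from $\partial D$ — to be the only genuinely delicate part; everything else is essentially mechanical.

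Finally I would pass to the limit. Compactness of $\omega\mapsto G*\omega$ gives $G*\omega_1^\varepsilon\to G*\omega^\mu$ in $C^1(\overline D)$, while $G*\omega_2^\varepsilon\to G(\cdot,x^\mu)$ and $\nabla(G*\omega_2^\varepsilon)\to\nabla G(\cdot,x^\mu)$ in $L^1(D)$, so $\psi^\varepsilon\to\psi^\mu:=G*\omega^\mu+G(x^\mu,\cdot)$ in $L^1(D)$. Splitting the weak Euler identity for $\omega^\varepsilon$ into four cross terms, three pass to the limit directly (weak-$*$ times strong-$L^1$ convergence), and the vortex--vortex term $\int_D\omega_2^\varepsilon J\nabla(G*\omega_2^\varepsilon)\cdot\nabla\phi\,dy$ is handled by symmetrizing the double integral in its two variables: the singular kernel cancels and what survives converges to $-J\nabla H(x^\mu)\cdot\nabla\phi(x^\mu)$. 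One arrives at
\[
\int_D\omega^\mu J\nabla\big(G*\omega^\mu+G(x^\mu,\cdot)\big)\cdot\nabla\phi\,dy+J\big[\nabla G*\omega^\mu(x^\mu)-\nabla H(x^\mu)\big]\cdot\nabla\phi(x^\mu)=0,
\]
and letting $\varepsilon\to0$ in $(\ast)$ yields exactly $\nabla G*\omega^\mu(x^\mu)=\nabla H(x^\mu)$, which is the second equation of \eqref{25} and annihilates the last term, leaving the first equation of \eqref{25}. For the structure of $\omega^\mu$: from $\psi^\varepsilon\to\psi^\mu$ a.e.\ and $\alpha_\varepsilon\to b^\mu$ one gets $\omega_1^\varepsilon=\mu I_{\{\psi^\varepsilon>\alpha_\varepsilon\}}\to\mu I_{\{\psi^\mu>b^\mu\}}$ a.e.\ once $\{\psi^\mu=b^\mu\}$ is shown negligible, and the latter holds because $-\Delta\psi^\mu=\omega^\mu$ off $x^\mu$ forces $\omega^\mu=0$ a.e.\ on any level set of $\psi^\mu$; hence $\omega^\mu=\mu I_{\{\psi^\mu>b^\mu\}}\in\mathcal N^\mu$ (its mass is $\lim\int_D\omega_1^\varepsilon\,dy=1$). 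Lastly $b^\mu>0$: each $\alpha_\varepsilon>0$ (else $\{\psi^\varepsilon>\alpha_\varepsilon\}\supseteq\{\psi^\varepsilon>0\}=D$, contradicting $|\{\psi^\varepsilon>\alpha_\varepsilon\}|=1/\mu<|D|$), and $\alpha_\varepsilon$ cannot tend to $0$ because $\psi^\mu>0$ on compact subsets of $D\setminus\{x^\mu\}$ exhausting measure $|D|>1/\mu$.
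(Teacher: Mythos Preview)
Your high-level strategy --- two-component Euler maximizers via Burton, then let the core shrink --- is exactly the paper's, but the execution diverges at the two decisive steps, and your version has genuine gaps there.

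\textbf{Concentration.} You invoke a ``quantitative Riesz inequality'' and claim only mass concentration $\int_{D\setminus B_r(x^\varepsilon)}\omega_2^\varepsilon\to0$. The paper proves the much stronger $\mathrm{diam}(\mathrm{supp}\,\omega_2^\lambda)\le R_0\varepsilon$: the energy lower bound plus a uniform bound on the local kinetic energy $T(\omega^\lambda)=\tfrac12\int\omega_2^\lambda(\psi^\lambda-c^\lambda)$ forces the Lagrange multiplier to satisfy $c^\lambda\ge-\tfrac{1}{2\pi}\ln\varepsilon-C$; then for every $x\in\mathrm{supp}\,\omega_2^\lambda$ the pointwise inequality $\psi^\lambda(x)\ge c^\lambda$ gives $\int\ln\tfrac{\varepsilon}{|x-y|}\,\omega_2^\lambda(y)\,dy\ge-C$, and splitting at radius $R\varepsilon$ puts more than half the mass in $B_{R\varepsilon}(x)$ for \emph{every} such $x$. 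Without full support shrinking, your translation variation $(\ast)$ is not justified (the support can touch $\partial D$ even if most mass is interior), and several of your later $L^1$ limits are unsupported.

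\textbf{Weak solution and patch structure.} You try to pass to the limit directly in the weak Euler identity and in the level-set representation $\omega_1^\varepsilon=\mu I_{\{\psi^\varepsilon>\alpha_\varepsilon\}}$. The paper avoids both. It first proves an abstract sufficient condition (its Lemma~4.1): if $x^\mu$ maximizes $y\mapsto G*\omega^\mu(y)-H(y)$ and $\omega^\mu$ maximizes $v\mapsto E(v)+G*v(x^\mu)$ over $\mathcal R(\omega^\mu)$, then $(\omega^\mu,x^\mu)$ is a weak solution. Both maximality properties are then inherited from the approximate maximizers by comparing energies against test configurations (a round core placed at an arbitrary $x_1$, an arbitrary background), the divergent $\ln\varepsilon$ self-interaction being cancelled via Riesz's rearrangement inequality. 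The patch form of $\omega^\mu$ is obtained by a separate bathtub argument on the convex hull $\mathcal F=\{0\le\omega\le\mu,\ \int_D\omega=1\}$: one shows $\omega^\mu$ maximizes $E(\omega)+G*\omega(x^\mu)$ over $\mathcal F$, and a first-variation (mass-transfer) argument forces any maximizer to be $\mu I_{\{G*\omega+G(x^\mu,\cdot)>b^\mu\}}$. The paper explicitly remarks that Burton's theorem does \emph{not} apply at this step because $\nabla^2 G(x^\mu,\cdot)\notin L^1_{\mathrm{loc}}$; your level-set limit runs into the same singularity. In particular $\alpha_\varepsilon\to b^\mu$ is asserted but not proved (note $\psi^\varepsilon$ itself is unbounded near the core, so boundedness of $G*\omega_1^\varepsilon$ is not the relevant control), and the implication ``$\omega^\mu=0$ a.e.\ on $\{\psi^\mu=b^\mu\}$, hence that set is null'' is simply false as stated.
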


\begin{remark}
If $\mu=\frac{1}{|D|}$, then there is only one element in $\mathcal{N}^\mu$, that is $\omega\equiv\mu$. In this case $\omega$ is a smooth function and the first equation in \eqref{24} is satisfied for any $x\in D$, so we need only consider the second equation. Notice that $H|_{\partial D}=+\infty$, so we can always choose $x\in D$ such that $x$ is a maximum point, thus a critical point, for the function $G*\omega-H$.
\end{remark}

 As for the asymptotic behavior of $(\omega^\mu,x^\mu)$ as $\mu\rightarrow+\infty$, we can prove that up to a subsequence ``most part'' of $\omega^\mu$ concentrates near a minimum point of $H$, say $x^*$, and at the same time $x^\mu\rightarrow x^*$.
\begin{theorem}\label{90}
Let $(\omega^\mu,x^\mu)$ be the weak solution to the stationary vortex-wave system \eqref{24} obtained in Theorem \ref{29}, then up to a subsequence we have $x^\mu\rightarrow x^*$ as $\mu\rightarrow+\infty$, where $x^*$ is a minimum point of $H$. Moreover, there exists $r^\mu$, $r^\mu\rightarrow0$ as $\mu\rightarrow +\infty$, such that
 \begin{equation}\label{10022}
 \lim_{\mu\rightarrow +\infty}\int_{B_{r^\mu}(x^*)}\omega^\mu(x)dx=1.
 \end{equation}
\end{theorem}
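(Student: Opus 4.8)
The plan is an energy comparison in the spirit of Turkington \cite{T}. Write $\phi^\mu=G*\omega^\mu$; by Theorem \ref{29} we have $\omega^\mu=\mu I_{A^\mu}$ with $A^\mu=\{\phi^\mu+G(x^\mu,\cdot)>b^\mu\}$, $|A^\mu|=1/\mu$, and $\nabla\phi^\mu(x^\mu)=\nabla H(x^\mu)$. The structural input I will use is that the construction of Sections 3 and 4 produces $(\omega^\mu,x^\mu)$ as a maximizer over $\mathcal{N}^\mu\times D$ of the renormalized energy (the Hamiltonian of the vortex--wave system)
\[
E(\omega,x)=E_0(\omega)+G*\omega(x)-H(x),\qquad E_0(\omega):=\tfrac12\int_D\omega\,G*\omega\,dy,
\]
of which \eqref{211} and $\nabla\phi^\mu(x^\mu)=\nabla H(x^\mu)$ are precisely the Euler--Lagrange conditions; a minimum point $x^*$ of $H$ exists because $h$ is bounded below on $D\times D$ and $H\to+\infty$ at $\partial D$. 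First I would record a priori bounds. Comparing the superlevel set $A^\mu$ with a disk of equal area (bathtub principle) and using $-h\le m:=\sup_{D\times D}(-h)$ gives $0<\phi^\mu\le\frac1{4\pi}\ln\mu+C$ on $\overline D$; the Riesz rearrangement inequality plus the explicit logarithmic energy of a uniform disk gives $E_0(\omega^\mu)\le\frac1{8\pi}\ln\mu+C$. Testing $E$ against the concentrated pair $(\mu I_{B_s(x^*)},x^*)$ with $\pi s^2=1/\mu$ — which realizes all of these bounds up to $o(1)$ — and using maximality yields $E(\omega^\mu,x^\mu)\ge\frac3{8\pi}\ln\mu+c-4H(x^*)+o(1)$ for an explicit constant $c$. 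Combined with $\phi^\mu(x^\mu)\le\frac1{4\pi}\ln\mu+C$, this forces $E_0(\omega^\mu)=\frac1{8\pi}\ln\mu+O(1)$, $\phi^\mu(x^\mu)=\frac1{4\pi}\ln\mu+O(1)$, and $H(x^\mu)=O(1)$; in particular $x^\mu$ stays in a fixed compact subset of $D$.

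The heart of the proof is to show that the mass of $\omega^\mu$ concentrates at $x^\mu$. Fix $\rho>0$ and split $\phi^\mu(x^\mu)=\mu\int_{A^\mu}G(x^\mu,y)\,dy$ over $A^\mu\cap B_\rho(x^\mu)$ and $A^\mu\setminus B_\rho(x^\mu)$. On the far set $G(x^\mu,\cdot)$ is bounded by a constant depending only on $\rho$, so that part is $O_\rho(1)$. The near set $A^\mu\cap B_\rho(x^\mu)$ has area $\epsilon_\rho(\mu)/\mu$, where $\epsilon_\rho(\mu):=\int_{B_\rho(x^\mu)}\omega^\mu\le1$; bounding $\int_{A^\mu\cap B_\rho(x^\mu)}\frac1{2\pi}\ln\frac1{|x^\mu-y|}\,dy$ by the same integral over a disk of equal area (bathtub) makes the near contribution $\le\frac{\epsilon_\rho(\mu)}{4\pi}\ln\mu+O_\rho(1)$. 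Hence $\phi^\mu(x^\mu)\le\frac{\epsilon_\rho(\mu)}{4\pi}\ln\mu+O_\rho(1)$, and comparison with $\phi^\mu(x^\mu)\ge\frac1{4\pi}\ln\mu-C$ from Step 1 gives $1-\epsilon_\rho(\mu)=O_\rho(1/\ln\mu)\to0$. Thus $\int_{B_\rho(x^\mu)}\omega^\mu\to1$ for every fixed $\rho>0$, and a diagonal argument yields $r^\mu\to0$ with $\int_{B_{r^\mu}(x^\mu)}\omega^\mu\to1$, equivalently $|A^\mu\setminus B_{r^\mu}(x^\mu)|=o(1/\mu)$.

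It remains to locate the limit of $x^\mu$. Pass to a subsequence with $x^\mu\to\bar x\in D$ (possible by the compactness in Step 1). Using the concentration of Step 2, I would recompute $E(\omega^\mu,x^\mu)=E_0(\omega^\mu)+\phi^\mu(x^\mu)-H(x^\mu)$ to $o(1)$: splitting $\iint_{A^\mu\times A^\mu}G$ and $\int_{A^\mu}G(x^\mu,\cdot)$ into parts inside and outside $B_{r^\mu}(x^\mu)$, the logarithmic pieces of the inside parts are at most the sharp disk values $\frac1{8\pi}\ln\mu+C$ and $\frac1{4\pi}\ln\mu+C$, the regular ($-h$) pieces of the inside parts equal $-H(\bar x)+o(1)$ and $-2H(\bar x)+o(1)$ (continuity of $h$, $x^\mu\to\bar x$), the outside parts are $o(1)$ (because $h$ is bounded below and $|A^\mu\setminus B_{r^\mu}(x^\mu)|=o(1/\mu)$), and $-H(x^\mu)=-H(\bar x)+o(1)$. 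This yields $E(\omega^\mu,x^\mu)\le\frac3{8\pi}\ln\mu+c-4H(\bar x)+o(1)$ with the \emph{same} constant $c$ as in Step 1. Comparing with the test-function lower bound gives $H(\bar x)\le H(x^*)=\min_D H$, so $\bar x$ is a minimum point of $H$; relabelling $\bar x$ as $x^*$ and replacing $r^\mu$ by $r^\mu+|x^\mu-x^*|$ turns $\int_{B_{r^\mu}(x^\mu)}\omega^\mu\to1$ into \eqref{10022}.

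The main obstacle is the constant bookkeeping that makes the comparison effective: one must check that the explicit constant $c$ in the test-function lower bound coincides, up to $o(1)$, with the one in the near-part upper bound of Step 3 — i.e. that the uniform disk centred at $x^*$ is asymptotically optimal for $E$ — which requires carrying out the disk energy computations carefully. A more structural point is to confirm that the construction of Sections 3 and 4 indeed delivers a \emph{global} maximizer of $E$ over $\mathcal{N}^\mu\times D$, and not merely a weak solution or critical point; if only the equation is available, one must instead extract the single inequality $E(\omega^\mu,x^\mu)\ge E(\mu I_{B_s(x^*)},x^*)+o(1)$ directly from the $\varepsilon\to0$ limit used there. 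The remaining ingredients — the bathtub principle, Riesz's inequality, and the boundedness and continuity of $h$ — are routine.
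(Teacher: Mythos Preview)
Your approach is essentially the paper's: Lemma~5.1 there establishes precisely the global maximality of $(\omega^\mu,x^\mu)$ for $E_0(\omega)+G*\omega(x)-H(x)$ over $\mathcal{N}^\mu\times D$ that you take as structural input (so your ``more structural point'' is settled), and your Steps~1--3 parallel Lemmas~5.3--5.7. The concentration argument in Step~2 is identical to the paper's Lemma~5.5, and the final choice $r^\mu=\rho^\mu+|x^\mu-x^*|$ matches exactly.

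There is, however, one genuine slip in Step~3. You split $\iint_{A^\mu\times A^\mu}G$ into inside and outside $B_{r^\mu}(x^\mu)$ and then claim ``the outside parts are $o(1)$ because $h$ is bounded below and $|A^\mu\setminus B_{r^\mu}(x^\mu)|=o(1/\mu)$.'' That justification controls only the $-h$ pieces of the outside; the \emph{logarithmic} pieces of the outside (the cross term $-\frac{\mu^2}{2\pi}\int_{A_{\mathrm{in}}}\int_{A_{\mathrm{out}}}\ln|x-y|$ and the far--far term) are not $o(1)$ by this reasoning --- points of $A_{\mathrm{out}}$ can lie arbitrarily close to $A_{\mathrm{in}}$, and a bathtub estimate only gives $o(\ln\mu)$, which is useless at this scale. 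Likewise, ``the logarithmic pieces of the inside parts are at most the sharp disk values'' is not quite the statement you want, since the inside carries mass $1-o(1)$, not $1$.

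The paper's fix (and the clean one) is not to split the logarithmic parts at all: apply Riesz's inequality directly to the full quantity $-\frac{1}{4\pi}\iint\ln|x-y|\,\omega^\mu\omega^\mu$ and bathtub to the full $-\frac{1}{2\pi}\int\ln|x^\mu-y|\,\omega^\mu$; these are bounded above by exactly the corresponding disk values, which cancel against the test-function lower bound with the \emph{same} constant $c$. Only the $-h$ pieces are then split into near/far, and there your argument (continuity of $h$ on the near part, $h$ bounded below on the far part) is correct and yields $-\frac{1}{2}\iint h\,\omega^\mu\omega^\mu\le -H(\bar x)+o(1)$ and $-\int h(x^\mu,\cdot)\,\omega^\mu\le -2H(\bar x)+o(1)$. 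This eliminates the ``constant bookkeeping'' obstacle you flagged: Riesz gives equality for the disk, so no bookkeeping is needed.
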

\begin{remark}\label{457}
Recalling that $\int_{D}\omega^\mu(x)dx=1$, it is easy to check that $\omega^\mu\rightarrow \delta(x^*)$ as $\mu\rightarrow +\infty$ in the distributional sense, where $\delta(x^*)$ is the Dirac measure located at $x^*$. More precisely,
\begin{equation}
\lim_{\mu\rightarrow+\infty}\int_D\omega^\mu(x)\phi(x)dx=\phi(x^*),\,\,\forall \phi\in C_c^\infty(D).
\end{equation}
In fact,
\begin{equation}
\begin{split}
\left|\int_D\omega^\mu(x)\phi(x)dx-\phi(x^*)\right|=&\left|\int_D(\phi(x)-\phi(x^*))\omega^\mu(x)dx\right|\\
\leq& \left|\int_{B_{r^\mu}(x^*)}(\phi(x)-\phi(x^*))\omega^\mu(x)dx\right|+\left|\int_{D\setminus B_{r^\mu}(x^*)}(\phi(x)-\phi(x^*))\omega^\mu(x)dx\right|\\
\leq&\sup_{x\in B_{r^\mu}(x^*)}|\phi(x)-\phi(x^*)|+2\sup_{x\in D}|\phi(x)|\int_{D\setminus B_{r^\mu}(x^*)}\omega^\mu(x)dx
\end{split}
\end{equation}
which goes to 0 as $\mu\rightarrow +\infty$, where we use \eqref{10022} and the continuity of $\phi$ at $x^*$.
\end{remark}
\begin{remark}
When $D$ is convex, $H$ is a strictly convex function (see \cite{CF1}), so there is only one minimum point for $H$. In this case the phrase ``up to a subsequence'' in Theorem \ref{90} can been removed.
\end{remark}

\section{Variational Problem}
Throughout this section we assume that $\mu$ is a fixed positive real number. We will construct a family of steady vortex patch solutions to the Euler equation and analyze their properties.

Let $\lambda$ be a positive number. Define
\begin{equation}\label{31}
\mathcal{M}^\lambda=\{\omega\in L^\infty(D)\,\,|\,\,\omega=\omega_1+\omega_2, \omega_1\in\mathcal{N}^\mu, \omega_2=\lambda I_B, \lambda|B|=1, supp(\omega_1)\cap B=\varnothing\}.
\end{equation}
Recall that $\mathcal{N}^\mu$ is defined in Theorem \ref{29}.
 For sufficiently large $\lambda$, since $\mu>\frac{1}{|D|}$, we know that $\mathcal{M}^\lambda$ is not empty. Moreover, it is easy to check that $\mathcal{M}^\lambda$ is a rearrangement class of any element in it if $\lambda>\mu$, that is, for any $\omega\in \mathcal{M}^\lambda$ we have $\mathcal{M}^\lambda=\mathcal{R}(\omega)$.
In the following we always assume $\lambda$ to be sufficiently large.

Now define the energy functional on $\mathcal{M}^\lambda$ by

\begin{equation}\label{32}
E(\omega)=\frac{1}{2}\int_D\int_DG(x,y)\omega(x)\omega(y)dxdy,\,\, \omega\in \mathcal{M}^\lambda,
\end{equation}
which represents the kinetic energy of an ideal fluid in $D$ with vorticity $\omega$.

Existence of a maximizer for $E$ relative to $\mathcal{M}^\lambda$ is an easy consequence of Corollary 3.4 in \cite{B4}. Therein by choosing $\mathcal{L}=-\Delta$, $E=\Psi$, $\mathcal{F}=\mathcal{M}^\lambda$ and $K$ as the Green's operator, we have:
\begin{proposition}\label{33}
There exists a maximizer for E relative to $\mathcal{M}^\lambda$; moreover, if $\omega^\lambda$ is a maximizer, then $\omega^\lambda=f(G*\omega^\lambda)$ a.e. in $D$ for some increasing function $f:\mathbb{R}\rightarrow\mathbb{R}$.
\end{proposition}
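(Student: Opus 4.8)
The goal is to establish Proposition~\ref{33}, namely existence of a maximizer for $E$ over $\mathcal{M}^\lambda$ together with the functional identity $\omega^\lambda = f(G*\omega^\lambda)$. The plan is to verify that all the hypotheses of Burton's Corollary 3.4 in \cite{B4} are met for our concrete choices $\mathcal{L}=-\Delta$, $K$ the Green's operator $\omega\mapsto G*\omega$, $\Psi=E$, and $\mathcal{F}=\mathcal{M}^\lambda$, and then simply invoke that result. Concretely, one checks first that $\mathcal{M}^\lambda$ is (for $\lambda$ large and $\lambda>\mu$) the rearrangement class $\mathcal{R}(\omega)$ of any of its elements, as already noted in the text right after \eqref{31}; this is the object to which Burton's theory applies. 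The function defining the rearrangement class is bounded (it takes only the values $0$, $\mu$, $\lambda$) and $D$ has finite measure, so $\mathcal{R}(\omega)\subset L^\infty(D)\subset L^p(D)$ for every $p$, and $\mathcal{R}(\omega)$ is a bounded subset of $L^p(D)$.

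Next I would record the two analytic facts about the Green's operator that Burton's framework requires: $K=G*\,\cdot\,$ is a compact, symmetric, positivity-preserving operator on $L^p(D)$, and $E(\omega)=\tfrac12\langle \omega, K\omega\rangle$ is weakly sequentially continuous (indeed weak-to-strong continuity of $K$ into $C^{1,\alpha}(\overline D)$, as stated in the excerpt after the definition of $G*\omega$, gives that $E$ is weakly continuous on bounded sets of $L^p$) and convex. Convexity holds because $-\Delta$ is a positive operator, so the associated quadratic form $E$ is convex; this is exactly the ``convex functional'' hypothesis in Burton's maximization-on-rearrangement-classes results. With weak continuity of $E$ and weak compactness of the weak closure $\overline{\mathcal{R}(\omega)}^w$ (a bounded, convex, weakly compact set in $L^p$, $1<p<\infty$), a maximizing sequence has a weakly convergent subsequence whose limit maximizes $E$ over the weak closure; Burton's theorem then upgrades this to a maximizer lying in $\mathcal{R}(\omega)=\mathcal{M}^\lambda$ itself, using strict convexity of $E$ in the relevant directions together with the fact that extreme points of $\overline{\mathcal{R}(\omega)}^w$ are precisely the elements of $\mathcal{R}(\omega)$.

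The identity $\omega^\lambda = f(G*\omega^\lambda)$ a.e. for some increasing $f:\mathbb{R}\to\mathbb{R}$ is the other half of Burton's conclusion: a maximizer of a convex functional over a rearrangement class is a ``rearrangement in the same direction'' as $G*\omega^\lambda$, which by the Lagrange-multiplier/bathtub characterization of such maximizers forces $\omega^\lambda$ to be a monotone (non-decreasing) function of its own stream function $\psi^\lambda := G*\omega^\lambda$. I would state this by citing \cite{B4} directly rather than reproving it. The only genuine verification needed on our side is that $\mathcal{M}^\lambda$ is a bona fide single rearrangement class and is nonempty for large $\lambda$ — both of which follow from $\mu|D|>1$ (so there is room inside $D$ for a disjoint patch $B$ of area $1/\lambda$ once $\lambda$ is large) and from the three-valued structure of its elements.

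The main obstacle, such as it is, is bookkeeping rather than mathematics: one must make sure the hypotheses in Burton's Corollary 3.4 are quoted with the correct choices of the abstract data $(\mathcal{L}, K, \Psi, \mathcal{F})$ and that the regularity and compactness of the Green's operator on the bounded, smooth, simply-connected domain $D$ are correctly cited (the $L^p$-elliptic estimate plus Sobolev embedding giving $G*\omega\in W^{2,p}\cap C^{1,\alpha}$, already noted in the excerpt). No delicate PDE estimate or new construction is required here; the real work of the paper lies in the later sections where the limit $\lambda\to\infty$ of these maximizers is analyzed. Hence the proof of Proposition~\ref{33} is essentially a one-line appeal to \cite{B4} after the routine checks above, and I would present it as such.
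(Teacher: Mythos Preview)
Your proposal is correct and follows exactly the paper's own approach: the paper proves Proposition~\ref{33} by a direct appeal to Corollary~3.4 in \cite{B4} with the very same identifications $\mathcal{L}=-\Delta$, $K=G*\,\cdot\,$, $\Psi=E$, $\mathcal{F}=\mathcal{M}^\lambda$. Your write-up simply spells out the routine verifications (nonemptiness, rearrangement-class structure, compactness and symmetry of the Green's operator, convexity of $E$) that the paper leaves implicit.
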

\begin{remark}
$\omega^\lambda$ is in fact a steady weak solution to the Euler equation, we refer the interested reader to \cite{T} for a simple proof.
\end{remark}

Let $\omega^\lambda\in \mathcal{M}^\lambda$ be a maximizer, then we can write $\omega^\lambda=\omega_1^\lambda+\omega^\lambda_2$, where $\omega_1^\lambda\in\mathcal{N}^\mu, \omega_2^\lambda=\lambda I_{B^\lambda}, \lambda|B^\lambda|=1,$ and $supp(\omega_1^\lambda)\cap B^\lambda=\varnothing$. For convenience we shall write $\psi^\lambda=G*\omega^\lambda$ and $\psi_i^\lambda=G*\omega_i^\lambda$, $i=1,2.$

\begin{lemma}\label{34}
$\omega^\lambda_2=\lambda I_{\{\psi^\lambda>c^\lambda\}}$ for some $c^\lambda>0$.
\end{lemma}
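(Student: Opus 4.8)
The plan is to exploit the structure theorem for the maximizer (Proposition \ref{33}), which already tells us $\omega^\lambda = f(\psi^\lambda)$ a.e.\ for some increasing $f:\mathbb{R}\to\mathbb{R}$. The strategy is to show that the level $c^\lambda$ separating where $\omega^\lambda$ takes the value $\lambda$ from where it takes the value $\mu$ is precisely the threshold describing $B^\lambda$, and then to argue $c^\lambda > 0$. First I would record that, since $\omega^\lambda = \omega_1^\lambda + \omega_2^\lambda$ takes only the values $0$, $\mu$, $\lambda$ (on the region where $\omega_1^\lambda$ lives, $0<\mu<\lambda$, and on $B^\lambda$ it equals $\lambda$), the increasing function $f$ must be a two-step function: there exist real numbers $t_1 \le t_2$ with $f \equiv 0$ on $(-\infty, t_1)$, $f \equiv \mu$ on $(t_1, t_2)$, and $f \equiv \lambda$ on $(t_2, +\infty)$ (values on the measure-zero level sets being irrelevant). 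Setting $c^\lambda := t_2$ gives, up to a null set, $\omega_2^\lambda = \lambda I_{\{\psi^\lambda > c^\lambda\}}$, since $\{\omega^\lambda = \lambda\} = B^\lambda$ modulo null sets and $f(\psi^\lambda) = \lambda$ exactly on $\{\psi^\lambda > c^\lambda\}$.

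The substantive point is that $f$ is genuinely a \emph{strictly} increasing step at $t_2$, i.e.\ that $\{\psi^\lambda > c^\lambda\}$ has the correct measure $1/\lambda$ and that the level set $\{\psi^\lambda = c^\lambda\}$ is null. The nullity of level sets of $\psi^\lambda = G*\omega^\lambda$ follows from standard elliptic regularity: $\psi^\lambda \in W^{2,p}(D)$ for all finite $p$, and $-\Delta \psi^\lambda = \omega^\lambda$ is bounded and not a.e.\ equal to any constant on a set of positive measure that is a level set, so by the strong maximum principle / the fact that $\nabla\psi^\lambda$ cannot vanish on a positive-measure subset of a level set, each set $\{\psi^\lambda = t\}$ has zero Lebesgue measure. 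Given this, the decomposition $\omega^\lambda = \mu I_{\{t_1 < \psi^\lambda < t_2\}} + \lambda I_{\{\psi^\lambda > t_2\}}$ holds a.e., and matching measures forces $\lambda|\{\psi^\lambda > t_2\}| = 1$, i.e.\ $B^\lambda = \{\psi^\lambda > c^\lambda\}$ up to null sets, which is the claim.

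It remains to prove $c^\lambda > 0$. Here I would argue by contradiction or directly via the strong maximum principle applied to $\psi^\lambda$: on the region $D \setminus \{\psi^\lambda > c^\lambda\}$ outside the support of $\omega_1^\lambda$ as well, $\psi^\lambda$ is superharmonic where $\omega^\lambda \ge 0$ and in fact $-\Delta\psi^\lambda = \omega^\lambda \ge 0$ throughout $D$ with $\psi^\lambda = 0$ on $\partial D$; since $\omega^\lambda \not\equiv 0$, the strong maximum principle gives $\psi^\lambda > 0$ in all of $D$. Then on $B^\lambda = \{\psi^\lambda > c^\lambda\} \neq \varnothing$ we need $c^\lambda \ge 0$; to upgrade to strict positivity one notes that if $c^\lambda \le 0$ then $\{\psi^\lambda > c^\lambda\} \supseteq \{\psi^\lambda > 0\} = D$, forcing $|B^\lambda| = |D|$, contradicting $\lambda|B^\lambda| = 1$ for $\lambda$ large. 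Hence $c^\lambda > 0$.

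\textbf{Main obstacle.} The delicate step is the structure argument identifying $f$ as a two-step function compatible with the \emph{given} partition $\omega^\lambda = \omega_1^\lambda + \omega_2^\lambda$ — one must be careful that Proposition \ref{33} yields $\omega^\lambda$, not $\omega_2^\lambda$, as a function of $\psi^\lambda$, and then deduce that $\omega_2^\lambda$ itself is a super-level-set patch. This requires the nullity of level sets of $\psi^\lambda$ so that the two-valued structure of $\omega^\lambda$ on a positive-measure region translates cleanly into a single threshold $c^\lambda$; without it the "increasing function" could be constant across the relevant jump and the super-level-set representation could fail on a positive-measure set. Everything else (positivity of $c^\lambda$, measure matching) is routine once that regularity fact is in hand.
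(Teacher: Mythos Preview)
Your overall strategy matches the paper's: use Proposition~\ref{33} to write $\omega^\lambda=f(\psi^\lambda)$ with $f$ increasing, identify $c^\lambda$ as the threshold at which $f$ jumps to the value $\lambda$, deal with the level set $\{\psi^\lambda=c^\lambda\}$, and then argue $c^\lambda>0$. Your argument for $c^\lambda>0$ (if $c^\lambda\le 0$ then $\{\psi^\lambda>c^\lambda\}=D$ because $\psi^\lambda>0$ in $D$ by the strong maximum principle, contradicting $|B^\lambda|=1/\lambda$) is correct and equivalent to what the paper does.

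The genuine gap is in your treatment of the level set. You assert that every level set $\{\psi^\lambda=t\}$ is Lebesgue-null, and justify this by claiming that ``$\nabla\psi^\lambda$ cannot vanish on a positive-measure subset of a level set.'' This is exactly backwards: it is a standard fact (Stampacchia's lemma, or the $W^{1,p}$ chain rule) that for $u\in W^{1,p}$ one has $\nabla u=0$ a.e.\ on every level set $\{u=t\}$. So $\nabla\psi^\lambda$ \emph{does} vanish a.e.\ on $\{\psi^\lambda=c^\lambda\}$, and nothing you wrote excludes that set having positive measure. The paper never claims nullity. Instead it uses precisely the fact you inverted: on $\{\psi^\lambda=c^\lambda\}$ one has $\nabla\psi^\lambda=0$ a.e., and then (since $\psi^\lambda\in W^{2,p}$, applying the same lemma componentwise to $\nabla\psi^\lambda$) also $\Delta\psi^\lambda=0$ a.e., hence $\omega^\lambda=-\Delta\psi^\lambda=0$ a.e.\ there. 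The level set, whatever its measure, therefore carries $\omega^\lambda=0\ne\lambda$ and is disjoint (modulo null sets) from $\{\omega^\lambda=\lambda\}=B^\lambda$. That is already enough to conclude $B^\lambda=\{\psi^\lambda>c^\lambda\}$; you do not need, and should not claim, nullity of level sets.
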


\begin{proof}
Since $|\{\omega^\lambda_2=\lambda\}|>0$ and $\omega^\lambda=f(\psi^\lambda)$ a.e. in $D$, it follows that $\{t\in\mathbb{R}\,|\,f(t)=\lambda\}$ is not empty, then we can define $c^\lambda=inf\{t\in\mathbb{R}\,|\,f(t)=\lambda\}$. By the fact that $f$ is an increasing function and $\psi^\lambda> 0$ in ${D}$(by strong maximum principle), we have $c^\lambda>0$.

 By the definition of $c^\lambda$, $\omega^\lambda=f(\psi^\lambda)\equiv\lambda$ a.e. on $\{x\in D\,|\,\psi^\lambda(x)>c^\lambda\}$, and $\omega^\lambda<\lambda$ a.e. on $\{x\in D\,|\,\psi^\lambda(x)<c^\lambda\}$. On the set $\{x\in D\,|\,\psi^\lambda(x)=\lambda\}$, we have $\nabla\psi^\lambda\equiv0$ a.e., which implies $\omega^\lambda=-\Delta\psi^\lambda\equiv0$ a.e..

 In conclusion, we have proved that $\{x\in D\,|\,\omega^\lambda(x)=\lambda\}=\{x\in D\,|\,\psi^\lambda(x)>c^\lambda\}$, then by choosing $\lambda>a$ we have $B^\lambda=\{x\in D\,|\,\psi^\lambda(x)>c^\lambda\}$, which is the desired result.
\end{proof}

Now we begin to analyze the asymptotic behavior of $\omega^\lambda_2$ as $\lambda\rightarrow+\infty$. In this and the next section we shall use $C$ to denote various constants not depending on $\lambda$.
\begin{lemma}\label{35}
$E(\omega^\lambda)\geq -\frac{1}{4\pi}\ln\varepsilon-C$, where $\varepsilon$ satisfies $\lambda\pi\varepsilon^2=1$.
\end{lemma}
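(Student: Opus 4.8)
The plan is to bound $E(\omega^\lambda)$ from below by evaluating $E$ on a cleverly chosen competitor $\tilde\omega\in\mathcal{M}^\lambda$ and using the maximality of $\omega^\lambda$, so that $E(\omega^\lambda)\geq E(\tilde\omega)$. First I would fix any point $x_0\in D$ bounded away from $\partial D$ and choose the competitor $\tilde\omega=\tilde\omega_1+\tilde\omega_2$ where $\tilde\omega_2=\lambda I_{B_\varepsilon(x_0)}$ is the indicator of a disc of radius $\varepsilon$ (so that $\lambda\pi\varepsilon^2=1$, matching the mass constraint), and $\tilde\omega_1=\mu I_{\tilde A}$ for some fixed measurable set $\tilde A\subset D$ with $\mu|\tilde A|=1$ and $\tilde A\cap B_\varepsilon(x_0)=\varnothing$; such a set exists for all large $\lambda$ since $\varepsilon\to 0$. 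Then I expand
\[
E(\tilde\omega)=\frac12\int\!\!\int G\,\tilde\omega_1\tilde\omega_1+\int\!\!\int G\,\tilde\omega_1\tilde\omega_2+\frac12\int\!\!\int G\,\tilde\omega_2\tilde\omega_2.
\]

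The first term is a fixed constant (independent of $\lambda$), hence $\geq -C$. For the cross term $\int\!\!\int G(x,y)\tilde\omega_1(x)\tilde\omega_2(y)\,dx\,dy$, I would note that $G*\tilde\omega_1$ is bounded on $\overline D$ (indeed $G*\tilde\omega_1\in C^{1,\alpha}(\overline D)$ by the estimate recalled in Section 2, and more simply $h$ is bounded below so $G$ is bounded below on compact subsets away from the diagonal), and $\tilde\omega_2$ has total mass $1$, so this term is bounded below by a constant $-C$ as well. The main term is the self-energy $\frac12\int_{B_\varepsilon(x_0)}\int_{B_\varepsilon(x_0)}G(x,y)\lambda^2\,dx\,dy$. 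Using $G(x,y)=\frac{1}{2\pi}\ln\frac{1}{|x-y|}-h(x,y)$, the $-h$ contribution is again $O(1)$ since $h$ is bounded on the compact set $\overline{B_\varepsilon(x_0)}\times\overline{B_\varepsilon(x_0)}\subset D\times D$. The logarithmic part is computed explicitly: $\frac{\lambda^2}{2}\int_{B_\varepsilon}\int_{B_\varepsilon}\frac{1}{2\pi}\ln\frac{1}{|x-y|}\,dx\,dy$. Scaling $x=x_0+\varepsilon\xi$, $y=x_0+\varepsilon\eta$ with $\xi,\eta\in B_1(0)$ gives a factor $\varepsilon^4$ from the measures and splits $\ln\frac{1}{|x-y|}=\ln\frac1\varepsilon+\ln\frac{1}{|\xi-\eta|}$; since $\lambda\varepsilon^2=\frac1\pi$, the prefactor $\lambda^2\varepsilon^4=\frac{1}{\pi^2}$, the $\ln\frac1\varepsilon$ piece contributes $\frac{1}{2}\cdot\frac{1}{2\pi}\cdot\frac{1}{\pi^2}\cdot\ln\frac1\varepsilon\cdot|B_1|^2=-\frac{1}{4\pi}\ln\varepsilon\cdot(\text{const})$, and the remaining $\iint_{B_1\times B_1}\ln\frac{1}{|\xi-\eta|}$ is a finite absolute constant. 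Collecting the normalizations ($|B_1(0)|=\pi$) the leading coefficient works out to exactly $-\frac{1}{4\pi}$, and everything else is $O(1)$, giving $E(\tilde\omega)\geq -\frac{1}{4\pi}\ln\varepsilon - C$.

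The only mildly delicate point is bookkeeping the constants so that the coefficient of $\ln\frac1\varepsilon$ comes out to precisely $\frac{1}{4\pi}$ rather than some other multiple; this is a routine scaling computation with no real obstacle, since all error terms are manifestly bounded because the competitor is supported in a fixed compact subset of $D$ where both $h$ and $G*\tilde\omega_1$ are controlled. Once $E(\tilde\omega)\geq -\frac{1}{4\pi}\ln\varepsilon-C$ is established, the lemma follows immediately from $E(\omega^\lambda)=\max_{\mathcal{M}^\lambda}E\geq E(\tilde\omega)$.
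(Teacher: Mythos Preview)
Your proposal is correct and follows essentially the same approach as the paper: choose a competitor in $\mathcal{M}^\lambda$ whose concentrated part is the characteristic function of a disc $B_\varepsilon(x_0)$, expand $E$ into the three pieces, and show the two mixed/background terms are $O(1)$ while the self-energy of the disc contributes $-\frac{1}{4\pi}\ln\varepsilon + O(1)$. The only cosmetic difference is that the paper estimates the logarithmic self-energy by the crude inequality $|x-y|\le 2\varepsilon$ on $B_\varepsilon\times B_\varepsilon$ (which immediately gives $-\frac{1}{4\pi}\ln\varepsilon-\frac{1}{4\pi}\ln 2$), whereas you carry out the full rescaling $x=x_0+\varepsilon\xi$, $y=x_0+\varepsilon\eta$; both yield the same leading coefficient $\frac{1}{4\pi}$.
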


\begin{proof}

We take the test function as follows: for any fixed $x_1\in D$, define $\bar{\omega}^\lambda=\bar{\omega}^\lambda_1+\bar{\omega}_2^\lambda$, where $\bar{\omega}_2^\lambda=\lambda I_{B_\varepsilon(x_1)}$, $\bar{\omega}^\lambda_1\in \mathcal{N}^\mu$ and $\bar{\omega}^\lambda_1=0$ a.e. in $B_\varepsilon(x_1)$. It's easy to check that $\bar{\omega}^\lambda\in \mathcal{M}^\lambda$, so we have $E(\omega^\lambda)\geq E(\bar{\omega}^\lambda)$. By simple calculation,

\begin{equation}\label{333}
\begin{split}
E(\bar{\omega}^\lambda)=&\frac{1}{2}\int_D\int_DG(x,y)\bar{\omega}^\lambda(x)\bar{\omega}^\lambda(y)dxdy\\
=&\frac{1}{2}\int_D\int_DG(x,y)(\bar{\omega}_1^\lambda(x)+\bar{\omega}_2^\lambda(x))(\bar{\omega}_1^\lambda(y)+\bar{\omega}_2^\lambda(y))dxdy\\
=& E(\bar{\omega}^\lambda_1)+E(\bar{\omega}^\lambda_2)+\frac{1}{2}\int_D\int_DG(x,y)\bar{\omega}^\lambda_1(x)\bar{\omega}^\lambda_2(y)dxdy+\frac{1}{2}\int_D\int_DG(x,y)\bar{\omega}^\lambda_2(x)\bar{\omega}^\lambda_1(y)dxdy \\
=& E(\bar{\omega}^\lambda_1)+E(\bar{\omega}^\lambda_2)+\int_D\int_DG(x,y)\bar{\omega}^\lambda_1(x)\bar{\omega}^\lambda_2(y)dxdy,
\end{split}
\end{equation}
where we use the symmetry of the Green's function, that is, $G(x,y)=G(y,x)$ for any $x,y\in D$.

Since $G\in L^1(D\times D)$, we have the following estimate for $E(\bar{\omega}^\lambda_1)$:
\begin{equation}\label{334}
|E(\bar{\omega}_1^\lambda)|=\left|\frac{1}{2}\int_D\int_DG(x,y)\bar{\omega}_1^\lambda(x)\bar{\omega}_1^\lambda(y)dxdy\right|\leq \frac{\mu^2}{2}\left|\int_D\int_DG(x,y)dxdy\right|\leq C.
\end{equation}

For the term $\int_D\int_DG(x,y)\bar{\omega}^\lambda_1(x)\bar{\omega}^\lambda_2(y)dxdy$ in \eqref{333}, by $L^p$ estimate we have
\begin{equation}\label{335}
\left|\int_D\int_DG(x,y)\bar{\omega}^\lambda_1(x)\bar{\omega}^\lambda_2(y)dxdy\right|=\left|\int_D G*\bar{\omega}^\lambda_1(y)\bar{\omega}^\lambda_2(y)dy\right|\leq C \int_D \bar{\omega}^\lambda_2(y)dy=C.
\end{equation}

It remains to estimate the lower bound of $E(\bar{\omega}^\lambda_2)$,
\begin{equation}\label{336}
\begin{split}
E(\bar{\omega}^\lambda_2)&=\frac{1}{2}\int_D\int_DG(x,y)\bar{\omega}^\lambda_2(x)\bar{\omega}^\lambda_2(y)dxdy\\
&=-\frac{1}{4\pi}\int_D\int_D\ln|x-y|\bar{\omega}^\lambda_2(x)\bar{\omega}^\lambda_2(y)dxdy-\frac{1}{2}\int_D\int_Dh(x,y)\bar{\omega}^\lambda_2(x)\bar{\omega}^\lambda_2(y)dxdy\\
&=-\frac{\lambda^2}{4\pi}\int_{B_\varepsilon(x_1)}\int_{B_\varepsilon(x_1)}\ln|x-y|dxdy-\frac{1}{2}\int_D\int_Dh(x,y)\bar{\omega}^\lambda_2(x)\bar{\omega}^\lambda_2(y)dxdy,
\end{split}
\end{equation}
Since $|x-y|\leq2\varepsilon$ for $x,y\in B_{\varepsilon}(x_1)$, we have
\[\begin{split}
-\frac{\lambda^2}{4\pi}\int_{B_\varepsilon(x_1)}\int_{B_\varepsilon(x_1)}\ln|x-y|dxdy
\geq& -\frac{\lambda^2}{4\pi}\int_{B_\varepsilon(x_1)}\int_{B_\varepsilon(x_1)}\ln|2\varepsilon|dxdy\\
=&-\frac{1}{4\pi}\ln\varepsilon-\frac{1}{4\pi}\ln2.
\end{split}\]
On the other hand, by the continuity of $h(x,y)$ in $D\times D$, the integral $\int_{B_\varepsilon(x_1)}\int_{B_\varepsilon(x_1)}h(x,y)dxdy$ converges to $h(x_1,x_1)$, thus is uniformly bounded, as $\lambda \rightarrow +\infty$, so
\begin{equation}\label{337}
E(\bar{\omega}^\lambda_2)\geq-\frac{1}{4\pi}\ln \varepsilon-C.
\end{equation}

Using \eqref{333},\eqref{334},\eqref{335} and \eqref{337} we complete the proof.
\end{proof}

Now we define $T(\omega^\lambda)=\frac{1}{2}\int_D\omega_2^\lambda(x)(\psi^\lambda-c^\lambda)(x)dx$, which represents the kinetic energy of the fluid on $B^\lambda$. To simplify presentation we write $\zeta^\lambda=\psi^\lambda-c^\lambda$. By the fact that $\zeta^\lambda=0$ on $\partial B^\lambda$, so
\begin{equation}\label{381}
T(\omega^\lambda)=\frac{1}{2}\int_{B^\lambda}\omega_2^\lambda(x)\zeta^\lambda(x)dx=
\frac{1}{2}\int_{B^\lambda}|\nabla\zeta^\lambda(x)|^2dx.
\end{equation}

 We have the following uniform estimate for $T$:
\begin{lemma}\label{36}
$T(\omega^\lambda)\leq C.$
\end{lemma}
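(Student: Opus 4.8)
The plan is to bound $T(\omega^\lambda)$ from above by combining the lower bound on $E(\omega^\lambda)$ from Lemma 3.5 with an upper bound on $E(\omega^\lambda)$, and then isolating the self-interaction energy of $\omega_2^\lambda$. First I would decompose the total energy exactly as in the proof of Lemma 3.5: writing $\omega^\lambda = \omega_1^\lambda + \omega_2^\lambda$, we get
\begin{equation*}
E(\omega^\lambda) = E(\omega_1^\lambda) + E(\omega_2^\lambda) + \int_D \int_D G(x,y)\omega_1^\lambda(x)\omega_2^\lambda(y)\,dx\,dy .
\end{equation*}
Exactly as in \eqref{334} and \eqref{335}, the term $E(\omega_1^\lambda)$ is bounded by $C$ (since $\omega_1^\lambda \in \mathcal{N}^\mu$ has $L^\infty$-norm $\mu$ and mass $1$, and $G \in L^1(D\times D)$), and the cross term is bounded in absolute value by $C\int_D \omega_2^\lambda = C$ via the $L^p$-estimate $\|G*\omega_1^\lambda\|_{L^\infty(D)} \le C$. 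Hence
\begin{equation*}
E(\omega_2^\lambda) \ge E(\omega^\lambda) - C \ge -\frac{1}{4\pi}\ln\varepsilon - C,
\end{equation*}
using Lemma 3.5 in the last step. Here I am actually using that $E(\omega^\lambda) \ge E(\bar\omega^\lambda) \ge -\frac{1}{4\pi}\ln\varepsilon - C$ directly, so no upper bound on $E(\omega^\lambda)$ is even needed at this stage.

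Next I would relate $T(\omega^\lambda)$ to $E(\omega_2^\lambda)$. Since $\psi^\lambda = G*\omega^\lambda = \psi_1^\lambda + \psi_2^\lambda$ and $T(\omega^\lambda) = \tfrac12 \int_{B^\lambda}\omega_2^\lambda(\psi^\lambda - c^\lambda)$, we expand
\begin{equation*}
T(\omega^\lambda) = \frac12 \int_D \omega_2^\lambda \psi_2^\lambda + \frac12 \int_D \omega_2^\lambda \psi_1^\lambda - \frac{c^\lambda}{2}\int_D \omega_2^\lambda = E(\omega_2^\lambda) + \frac12 \int_D \omega_2^\lambda \psi_1^\lambda - \frac{c^\lambda}{2}.
\end{equation*}
The middle term is again $O(1)$ by the $L^p$-estimate on $\psi_1^\lambda$. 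Thus the key remaining point is to control $E(\omega_2^\lambda)$ from above and $c^\lambda$ from below. For the upper bound on $E(\omega_2^\lambda)$: decomposing $G = \frac{1}{2\pi}\ln\frac{1}{|x-y|} - h$, the regular part $h$ is bounded (it is bounded below, and bounded above on $\overline{D}\times\overline{D}$ by continuity), so $-\tfrac12\int\int h\,\omega_2^\lambda\omega_2^\lambda = O(1)$; for the singular part, one uses the classical rearrangement inequality: among all sets $B$ of fixed measure $|B^\lambda| = 1/\lambda = \pi\varepsilon^2$, the quantity $\int_B \int_B \ln\frac{1}{|x-y|}$ is maximized when $B$ is a ball of radius $\varepsilon$, giving $\frac{\lambda^2}{2\pi}\int_{B^\lambda}\int_{B^\lambda}\ln\frac{1}{|x-y|} \le -\frac{1}{4\pi}\ln\varepsilon + C$. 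Hence $E(\omega_2^\lambda) \le -\frac{1}{4\pi}\ln\varepsilon + C$.

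Combining the two-sided bounds $-\frac{1}{4\pi}\ln\varepsilon - C \le E(\omega_2^\lambda) \le -\frac{1}{4\pi}\ln\varepsilon + C$ shows that $E(\omega_2^\lambda) + \frac{1}{4\pi}\ln\varepsilon$ is $O(1)$; but this alone is not enough for $T$, since $T = E(\omega_2^\lambda) - \tfrac{c^\lambda}{2} + O(1)$ and I still need a lower bound $c^\lambda \ge -C$ — in fact $c^\lambda > 0$ is already known from Lemma 3.4, so $-\tfrac{c^\lambda}{2} < 0$ and causes no trouble for the \emph{upper} bound on $T$. Therefore
\begin{equation*}
T(\omega^\lambda) = E(\omega_2^\lambda) + O(1) - \frac{c^\lambda}{2} \le -\frac{1}{4\pi}\ln\varepsilon + C - \frac{c^\lambda}{2} \le -\frac{1}{4\pi}\ln\varepsilon + C,
\end{equation*}
which is \emph{not} bounded — so this is where the argument must do more work. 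The resolution I anticipate: one must show $c^\lambda \ge -\frac{1}{2\pi}\ln\varepsilon - C$, i.e. that $c^\lambda$ itself is comparable to $-\frac{1}{2\pi}\ln\varepsilon$. This should follow by estimating $\psi_2^\lambda$ pointwise: on $B^\lambda$ one has $\psi^\lambda > c^\lambda$, while $\psi_2^\lambda = G*\omega_2^\lambda$ is bounded \emph{above} on all of $D$ by roughly $-\frac{1}{2\pi}\ln\varepsilon + C$ (compare with the Newtonian potential of a ball of radius $\varepsilon$ and mass $1$, whose maximum is $O(-\ln\varepsilon)$), and $\psi_1^\lambda = O(1)$; but since $c^\lambda$ is the infimum of $\psi^\lambda$ over a set of positive measure where $\omega_2^\lambda = \lambda$, one instead needs a matching \emph{lower} bound, obtained by testing at a point, together with the identity $2T(\omega^\lambda) + c^\lambda \int_{B^\lambda}\omega_2^\lambda = \int_{B^\lambda}\omega_2^\lambda\psi^\lambda$ and the already-established two-sided control of $E(\omega_2^\lambda)$. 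Concretely, from $\int_{B^\lambda}\omega_2^\lambda \psi_2^\lambda = 2E(\omega_2^\lambda) = -\frac{1}{2\pi}\ln\varepsilon + O(1)$ and $\int_{B^\lambda}\omega_2^\lambda\psi_1^\lambda = O(1)$ and $\int_{B^\lambda}\omega_2^\lambda = 1$, we get $2T(\omega^\lambda) + c^\lambda = -\frac{1}{2\pi}\ln\varepsilon + O(1)$; since $T(\omega^\lambda) \ge 0$ (it equals $\tfrac12\int_{B^\lambda}|\nabla\zeta^\lambda|^2$) and $c^\lambda > 0$, \emph{each} of them is bounded by $-\frac{1}{2\pi}\ln\varepsilon + C$, which still is not the claimed $O(1)$ bound for $T$. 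Hence the genuine content of the lemma must be a sharper lower bound $c^\lambda \ge -\frac{1}{2\pi}\ln\varepsilon - C$, forcing $2T(\omega^\lambda) \le O(1)$; establishing this sharp lower bound on $c^\lambda$ — equivalently, showing that the ``depth'' of the patch $B^\lambda$ in the stream function is asymptotically the full $-\frac{1}{2\pi}\ln\varepsilon$ and not less — is the main obstacle, and I expect it to rely on a clean pointwise comparison of $\psi^\lambda$ inside $B^\lambda$ with the explicit stream function of a disc of the same area, using that $B^\lambda = \{\psi^\lambda > c^\lambda\}$ is a superlevel set (Lemma 3.4) so that an isoperimetric/rearrangement argument pins down its shape.
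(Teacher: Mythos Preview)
Your approach via energy decomposition is genuinely different from the paper's, but it has a real gap at the decisive step. You correctly reduce everything to the sharp lower bound $c^\lambda \ge -\frac{1}{2\pi}\ln\varepsilon - C$, yet you do not prove this bound; you only gesture at an ``isoperimetric/rearrangement'' comparison without carrying it out. In the paper's logical order this lower bound on $c^\lambda$ (inequality \eqref{392} in the proof of Lemma~\ref{37}) is obtained \emph{from} Lemma~\ref{36} together with Lemma~\ref{35}, via the identity $E(\omega^\lambda) = T(\omega^\lambda) + \tfrac{1}{2}\int_D \omega_1^\lambda\psi^\lambda + \tfrac{c^\lambda}{2}$. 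So relative to the paper's development your route is circular: you would need an independent derivation of $c^\lambda \ge -\frac{1}{2\pi}\ln\varepsilon - C$, and this is not straightforward without a priori control on the shape or location of $B^\lambda$ --- which is precisely what Lemma~\ref{37} supplies only \emph{after} Lemma~\ref{36}.

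The paper's proof avoids energy comparison entirely. It uses the identity $T(\omega^\lambda) = \tfrac{1}{2}\int_{B^\lambda}|\nabla\zeta^\lambda|^2$ (with $\zeta^\lambda = \psi^\lambda - c^\lambda$) together with the critical Sobolev embedding $W_0^{1,1}(D)\hookrightarrow L^2(D)$ in two dimensions: H\"older gives $T(\omega^\lambda)\le \tfrac{\lambda}{2}|B^\lambda|^{1/2}\|(\zeta^\lambda)^+\|_{L^2}$, Sobolev gives $\|(\zeta^\lambda)^+\|_{L^2}\le C\|\nabla(\zeta^\lambda)^+\|_{L^1}$, and one more H\"older combined with $\lambda|B^\lambda|=1$ yields
\[
T(\omega^\lambda)\le C\Bigl(\int_{B^\lambda}|\nabla\zeta^\lambda|^2\Bigr)^{1/2}=C\sqrt{2T(\omega^\lambda)},
\]
whence $T(\omega^\lambda)\le C$. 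This self-bounding trick (due to Turkington) is what makes the argument close without any information on $c^\lambda$ or on the geometry of $B^\lambda$.
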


\begin{proof}

Firstly by H\"{o}lder's inequality, we have
\[\begin{split}
T(\omega^\lambda)=\frac{1}{2}\lambda\int_{B^\lambda}\zeta^\lambda(x)dx\leq\frac{1}{2}\lambda|B^\lambda|^{\frac{1}{2}}\{\int_{B^\lambda}|\zeta^\lambda(x)|^2dx\}^{\frac{1}{2}}.
\end{split}\]
By the Sobolev embedding $W_0^{1,1}(D)\hookrightarrow L^2(D)$, we have
\[\begin{split}
\left\{\int_{B^\lambda}|\zeta^\lambda(x)|^2dx\right\}^{\frac{1}{2}}=\left\{\int_{D}|(\zeta^\lambda)^+(x)|^2dx\right\}^{\frac{1}{2}}\leq& C\int_{D}|\nabla(\zeta^\lambda)^+(x)|dx,
\end{split}\]
where $(\zeta^\lambda)^+(x)=max\{0,\zeta^\lambda(x)\}$.
It follows that
\[\begin{split}
T(\omega_\lambda)\leq&C\lambda|B^\lambda|^{\frac{1}{2}}\int_{D}|\nabla(\zeta^\lambda)^+(x)|dx=C\lambda|B^\lambda|^{\frac{1}{2}}\int_{B^\lambda}|\nabla\zeta^\lambda(x)|dx\leq C\lambda|B^\lambda|\left\{\int_{B^\lambda}|\nabla\zeta^\lambda(x)|^2dx\right\}^{\frac{1}{2}}.
\end{split}\]
Notice that $\lambda|B^\lambda|=\int_D\omega^\lambda_2(x)dx=1$, we obtain
\begin{equation}\label{382}
T(\omega^\lambda)\leq C\left\{\int_{B^\lambda}|\nabla\zeta^\lambda(x)|^2dx\right\}^{\frac{1}{2}}.
\end{equation}
By comparing \eqref{382} with \eqref{381} we get the desired result.

\end{proof}

\begin{lemma}\label{37}
There exists $R_0>0$ such that $diam(supp(\omega^\lambda_2))\leq R_0\varepsilon$.
\end{lemma}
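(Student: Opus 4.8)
The plan is to establish a ``uniform clustering'' estimate for the patch $B^\lambda$ of Lemma~\ref{34} (which, by that lemma, equals $\{\psi^\lambda>c^\lambda\}$): since $supp(\omega_2^\lambda)=\overline{B^\lambda}$ and $|B^\lambda|=\la^{-1}=\pi\ep^2$, it suffices to produce a constant $R_1$, independent of $\la$, with $|B^\lambda\cap B_{R_1\ep}(x_0)|\ge\frac34\pi\ep^2$ for every $x_0\in\overline{B^\lambda}$ (once $\la$ is large). Granting this, a pigeonhole argument closes the proof: if $x_1,x_2\in\overline{B^\lambda}$ had $|x_1-x_2|>2R_1\ep$, the discs $B_{R_1\ep}(x_1)$ and $B_{R_1\ep}(x_2)$ would be disjoint while each meets $B^\lambda$ in a set of area $\ge\frac34\pi\ep^2$, contradicting $|B^\lambda|=\pi\ep^2$; hence $diam(supp(\omega_2^\lambda))\le 2R_1\ep$ and we take $R_0=2R_1$.

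To obtain the clustering estimate I would first pin down the level $c^\lambda$. Writing $\frac12\int_D\psi^\lambda\omega_2^\lambda\,dx=T(\omega^\lambda)+\frac12c^\lambda\la|B^\lambda|=T(\omega^\lambda)+\frac12c^\lambda$, using the symmetry of $G$ to recast the cross term as $\int_D\psi_1^\lambda\omega_2^\lambda\,dx$, and invoking the $\la$-independent bounds $|E(\omega_1^\lambda)|\le C$ (exactly as in \eqref{334}, since $G\in L^1(D\times D)$ and $\|\omega_1^\lambda\|_\infty=\mu$) and $\|\psi_1^\lambda\|_{L^\infty(D)}\le C$ (standard elliptic estimates, uniform because $\|\omega_1^\lambda\|_{L^p(D)}$ is bounded independently of $\la$), one gets $E(\omega^\lambda)\le\frac12c^\lambda+C$. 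Combined with Lemma~\ref{35} this yields
\[
c^\lambda\ \ge\ \frac{1}{2\pi}\ln\frac1\ep-C.
\]
Since $\psi^\lambda\ge c^\lambda$ on $\overline{B^\lambda}$ by continuity, it follows that $\psi_2^\lambda(x_0)=\psi^\lambda(x_0)-\psi_1^\lambda(x_0)\ge\frac1{2\pi}\ln\frac1\ep-C$ for every $x_0\in\overline{B^\lambda}$.

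The core of the argument is a matching upper bound for $\psi_2^\lambda(x_0)$ that detects how concentrated $B^\lambda$ is near $x_0$. Fix $r\in(0,1)$ and set $m_r=|B^\lambda\cap B_r(x_0)|$, $\theta=m_r/(\pi\ep^2)\in[0,1]$. Bounding $G(x_0,y)\le\frac1{2\pi}\ln\frac1{|x_0-y|}+C_0$ (the regular part $h$ is bounded below), splitting $\psi_2^\lambda(x_0)=\la\int_{B^\lambda}G(x_0,y)\,dy$ over $B_r(x_0)$ and its complement, estimating the inner integral by the bathtub principle (the supremum of $\int_S\ln\frac1{|x_0-y|}\,dy$ over measurable $S$ with $|S|=m_r$ is attained on the centred disc $B_{\sqrt{m_r/\pi}}(x_0)$) and the outer integral by $(\pi\ep^2-m_r)\ln\frac1r$, and finally using $\la\pi\ep^2=1$, one arrives after simplification at
\[
\psi_2^\lambda(x_0)\ \le\ \frac{\theta}{2\pi}\ln\frac1\ep+\frac{1-\theta}{2\pi}\ln\frac1r+C,
\]
the leftover $\frac{\theta}{4\pi}\ln\frac1\theta$ being absorbed into $C$. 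Comparing the two bounds yields $(1-\theta)\ln\frac{r}{\ep}\le C$; choosing $r=R_1\ep$ with $R_1$ large enough that $C/\ln R_1\le\frac14$ forces $\theta\ge\frac34$, which is exactly the clustering estimate (valid for $\ep<1/R_1$, i.e. for $\la$ large).

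I expect the main obstacle to be the upper-bound step: one must run the rearrangement estimate for $\psi_2^\lambda$ with care and check that every error term — the contribution of $h$, the term $\theta\ln\frac1\theta$, and the constants buried in the elliptic and $L^1$ bounds — is genuinely independent of $\la$, so that the key inequality $(1-\theta)\ln(r/\ep)\le C$ comes out with precisely the structure needed to choose $R_1$. The lower bound for $c^\lambda$ and the pigeonhole step are then routine.
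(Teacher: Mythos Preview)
Your proposal is correct and follows essentially the same route as the paper: both derive $c^\lambda\ge\frac{1}{2\pi}\ln\frac1\ep-C$ from Lemmas~\ref{35} and~\ref{36}, turn the inequality $\psi^\lambda(x_0)\ge c^\lambda$ into a concentration estimate for $\omega_2^\lambda$ near each $x_0\in\overline{B^\lambda}$ via the bathtub/rearrangement bound on the near part and the trivial bound on the far part, and then close with the same pigeonhole argument. The only cosmetic difference is that you package the concentration step as an upper bound on $\psi_2^\lambda$ in terms of the mass fraction $\theta$, whereas the paper manipulates the integral $\int_D\ln\frac{\ep}{|x-y|}\omega_2^\lambda\,dy$ directly; the content is identical.
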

\begin{proof}
Firstly we estimate the lower bound for $c^\lambda$. By the definition of $T(\omega^\lambda)$,
\begin{equation}\label{391}
E(\omega^\lambda)=T(\omega^\lambda)+\frac{1}{2}\int_D\omega^\lambda_1(x)\psi^\lambda(x)dx+\frac{c^\lambda}{2},
\end{equation}
It is easy to check that $\int_D\omega^\lambda_1(x)\psi^\lambda(x)dx$ has a uniform upper bounded. In fact,
\begin{equation}
\begin{split}
\int_D\omega^\lambda_1(x)\psi^\lambda(x)dx=&\int_D\omega^\lambda_1(x)G*(\omega^\lambda_1+\omega^\lambda_2)(x)dx\\
=&\int_D\int_DG(x,y)\omega^\lambda_1(x)\omega^\lambda_1(y)dxdy+\int_D\int_DG(x,y)\omega^\lambda_1(x)\omega^\lambda_2(y)dxdy\\
\leq&\mu^2\int_D\int_D|G(x,y)|dxdy+|G*\omega^\lambda_1|_{L^\infty(D)}\\
\leq&C.
\end{split}
\end{equation}
Now \eqref{391} together with Lemma \ref{35} and Lemma \ref{36} gives
\begin{equation}\label{392}
c^\lambda\geq-\frac{1}{2\pi}\ln\varepsilon-C.
\end{equation}

Now for any $x\in supp(\omega^\lambda_2)$, we have $\psi^\lambda(x)\geq c^\lambda$, that is,
\begin{equation}\label{48}
\int_DG(x,y)w^\lambda(y)dy\geq-\frac{1}{2\pi}\ln \varepsilon -C.
\end{equation}
Since $h(x,y)$ is bounded from below on $D\times D$, we have

\begin{equation}\label{49}
\int_D\ln\frac{1}{|x-y|}\omega^\lambda(y)dy+\ln\varepsilon\geq -C,
\end{equation}
or equivalently,
\begin{equation}
\int_D\ln\frac{1}{|x-y|}\omega_1^\lambda(y)dy+\int_D\ln\frac{1}{|x-y|}\omega_2^\lambda(y)dy+\ln\varepsilon\geq -C,
\end{equation}
Notice that
\begin{equation}\label{395}
\left|\int_D\ln\frac{1}{|x-y|}\omega^\lambda_1(y)dy\right|\leq \mu\sup_{x\in D}\left|\int_D\ln|x-y|dy\right|\leq C,
\end{equation}
so we get
\begin{equation}\label{396}
\int_D\ln\frac{\varepsilon}{|x-y|}\omega_2^\lambda(y)dy\geq -C.
\end{equation}

Now let $R>1$ be a positive number to be determined. We divide the integral in \eqref{396} into two parts,
\begin{equation}\label{50}
\int_{B_{R\varepsilon}(x)}\ln\frac{\varepsilon}{|x-y|}\omega_2^\lambda(y)dy+\int_{D\setminus B_{R\varepsilon}(x)}\ln\frac{\varepsilon}{|x-y|}\omega_2^\lambda(y)dy\geq -C.
\end{equation}
By the rearrangement inequality the first integral in $\eqref{50}$ can be estimated as follows:
\begin{equation}\label{1080}
\int_{B_{R\varepsilon}(x)}\ln\frac{\varepsilon}{|x-y|}\omega_2^\lambda(y)dy\leq \lambda\int_{B_\varepsilon(x)}\ln\frac{\varepsilon}{|x-y|}dy=\lambda\int_{B_\varepsilon(0)}\ln\frac{\varepsilon}{|y|}dy=\frac{1}{2}.
\end{equation}
By comparing \eqref{50} with \eqref{1080} we obtain
\[\int_{D\verb|\|B_{R\varepsilon}(x)}\ln\frac{\varepsilon}{|x-y|}\omega_2^\lambda(y)dy \geq -C.\]
We observe now that
\begin{equation}
\int_{D\verb|\|B_{R\varepsilon}(x)}\ln\frac{\varepsilon}{|x-y|}\omega_2^\lambda(y)dy\leq\int_{D\verb|\|B_{R\varepsilon}(x)}\ln\frac{1}{R}\omega_2^\lambda(y)dy,
\end{equation}
therefore
\begin{equation}
\int_{D\verb|\|B_{R\varepsilon}(x)}\omega_2^\lambda(y)dy\leq  \frac{C}{\ln R},
\end{equation}
which means
\begin{equation}
\int_{B_{R\varepsilon}(x)}\omega_2^\lambda(y)dy\geq 1-  \frac{C}{\ln R}.
\end{equation}
Choosing $R$ large such that $ 1- \frac{C}{\ln R}>\frac{1}{2}$, we have
\begin{equation}
\int_{B_{R\varepsilon}(x)}\omega_2^\lambda(y)dy> \frac{1}{2}.
\end{equation}
Since $x\in supp(\omega^\lambda_2)$ is arbitrary and $\int_D\omega^\lambda_2(y)dy=1$, we get the desired result by choosing $R_0=2R$.

\end{proof}

Up to now we have established a family of functions $\omega^\lambda_1$ and $\omega^\lambda_2$, moreover, we show that $diam(supp(\omega^\lambda_2))\rightarrow0$ as $\lambda\rightarrow+\infty$. Now we are in a position to consider the limits of $\omega^\lambda_1$ and $\omega^\lambda_2$. To this end, define the center of $\omega^\lambda_2$ by
\begin{equation}
x^\lambda=\int_Dx\omega^\lambda_2(x)dx.
\end{equation}

Up to a subsequence, we can assume that as $\lambda\rightarrow +\infty$, there exists $x^\mu\in \overline{D}$ such that
$$x^{\lambda}\rightarrow x^\mu.$$

On the other hand, since $\{\omega^\lambda_1\}$ is bounded in $\L^\infty(D)$(recall that $\mu$ is fixed in this section), up to a subsequence we assume that as $\lambda\rightarrow +\infty$
$$\omega_1^\lambda\rightarrow \omega^\mu\,\,\text{ weakly star in}\, L^\infty(D)$$
 for some $\omega^\mu\in \overline{\mathcal{N}^\mu}$, where $\overline{\mathcal{N}^\mu}$ denotes the weak star closure of $\mathcal{N}^\mu$ in $L^\infty(D)$. By standard elliptic equation theory we also have as $\lambda\rightarrow +\infty$
 $$G*\omega^\lambda_1\rightarrow G*\omega^\mu \,\,\text{in}\,C^{1,\alpha}(\overline{D}).$$

We end this section by showing the following lemma which will be frequently used in the next section.

\begin{lemma}\label{700}
We have

(1), $|G*\omega^\lambda_1|_{L^\infty(D)}\leq C$, for some $C>0$ not depending on $\lambda$.

(2), $E(\omega^\lambda_1)=E(\omega^\mu)+o(1)$,

(3), $\int_DG*\omega^\lambda_1(x)\omega^\lambda_2(x)dx=G*\omega^\mu(x^\mu)+o(1)$,

where $o(1)$ denotes quantities such that $o(1)\rightarrow 0$ as $\lambda\rightarrow+\infty$.
\end{lemma}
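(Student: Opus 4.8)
\textbf{Proof proposal for Lemma \ref{700}.}

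The plan is to prove the three estimates in order, using the convergences established just before the statement: $x^\lambda\to x^\mu$ in $\overline{D}$, $\omega_1^\lambda\rightharpoonup^* \omega^\mu$ in $L^\infty(D)$, and $G*\omega_1^\lambda\to G*\omega^\mu$ in $C^{1,\alpha}(\overline{D})$. For part (1), the bound $|G*\omega_1^\lambda|_{L^\infty(D)}\le C$ is the standard $L^p$-plus-Sobolev estimate applied to $-\Delta(G*\omega_1^\lambda)=\omega_1^\lambda$ with zero Dirichlet data: since $\|\omega_1^\lambda\|_{L^\infty(D)}=\mu$ is fixed in this section, elliptic regularity gives $\|G*\omega_1^\lambda\|_{W^{2,p}(D)}\le C_p$ for every finite $p$, and choosing $p>2$ and applying the Sobolev embedding $W^{2,p}(D)\hookrightarrow C^{0}(\overline{D})$ yields the uniform $L^\infty$ bound. (This is in fact already subsumed in the $C^{1,\alpha}$ convergence, which forces a uniform $C^{1,\alpha}$ bound, hence a uniform $L^\infty$ bound.)

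For part (2), write $E(\omega_1^\lambda)=\frac12\int_D G*\omega_1^\lambda(x)\,\omega_1^\lambda(x)\,dx$ and split the difference from $E(\omega^\mu)=\frac12\int_D G*\omega^\mu(x)\,\omega^\mu(x)\,dx$ as
\begin{equation}
2\bigl(E(\omega_1^\lambda)-E(\omega^\mu)\bigr)=\int_D\bigl(G*\omega_1^\lambda-G*\omega^\mu\bigr)\omega_1^\lambda\,dx+\int_D G*\omega^\mu\,\bigl(\omega_1^\lambda-\omega^\mu\bigr)\,dx.
\end{equation}
The first term is bounded by $\|G*\omega_1^\lambda-G*\omega^\mu\|_{L^\infty(D)}\,\|\omega_1^\lambda\|_{L^1(D)}\le C\|G*\omega_1^\lambda-G*\omega^\mu\|_{C^{1,\alpha}}\to 0$ by the strong $C^{1,\alpha}$ convergence; the second term tends to $0$ because $G*\omega^\mu\in L^1(D)$ is a fixed test function against which $\omega_1^\lambda-\omega^\mu\rightharpoonup^* 0$. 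Hence $E(\omega_1^\lambda)=E(\omega^\mu)+o(1)$.

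For part (3), write $\int_D G*\omega_1^\lambda(x)\,\omega_2^\lambda(x)\,dx$ and introduce the center $x^\lambda$. Since $\int_D\omega_2^\lambda\,dx=1$ and $\omega_2^\lambda\ge 0$, this integral is an average of $G*\omega_1^\lambda$ over the support of $\omega_2^\lambda$; by Lemma \ref{37} that support has diameter at most $R_0\varepsilon\to 0$ and contains $x^\lambda$, so
\begin{equation}
\left|\int_D G*\omega_1^\lambda(x)\,\omega_2^\lambda(x)\,dx-G*\omega_1^\lambda(x^\lambda)\right|\le \sup_{|x-x^\lambda|\le R_0\varepsilon}\bigl|G*\omega_1^\lambda(x)-G*\omega_1^\lambda(x^\lambda)\bigr|\le C R_0\varepsilon\to 0,
\end{equation}
using the uniform Lipschitz (indeed $C^{1,\alpha}$) bound on $G*\omega_1^\lambda$ from part (1). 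It remains to show $G*\omega_1^\lambda(x^\lambda)\to G*\omega^\mu(x^\mu)$: this follows by adding and subtracting $G*\omega^\mu(x^\lambda)$, estimating $|G*\omega_1^\lambda(x^\lambda)-G*\omega^\mu(x^\lambda)|\le\|G*\omega_1^\lambda-G*\omega^\mu\|_{L^\infty(D)}\to 0$ and $|G*\omega^\mu(x^\lambda)-G*\omega^\mu(x^\mu)|\to 0$ by continuity of $G*\omega^\mu$ on $\overline{D}$ together with $x^\lambda\to x^\mu$. Combining the two displays gives the claim. The only mild subtlety — and the step I would be most careful about — is part (3): one must use the uniform (in $\lambda$) modulus of continuity of $G*\omega_1^\lambda$ rather than pointwise convergence alone, since the evaluation point $x^\lambda$ is itself moving; the uniform $C^{1,\alpha}$ bound from part (1) is exactly what makes this legitimate. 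Note also that $x^\mu\in\overline D$ a priori, but $G*\omega^\mu$ extends continuously to $\overline D$ (indeed vanishes on $\partial D$), so all evaluations make sense.
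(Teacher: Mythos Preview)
Your proof is correct and follows essentially the same line as the paper's: part~(1) via $L^p$ estimates and Sobolev embedding, part~(2) via the same add-and-subtract splitting, and part~(3) by comparing the $\omega_2^\lambda$-average of $G*\omega_1^\lambda$ to a point evaluation and then passing to the limit. The only cosmetic difference is that in part~(3) the paper centers its ball at $x^\mu$ (using $\mathrm{supp}(\omega_2^\lambda)\subset B_{r^\lambda}(x^\mu)$ and the uniform $L^\infty$ convergence of $G*\omega_1^\lambda$ to $G*\omega^\mu$), whereas you center at $x^\lambda$ and invoke the uniform Lipschitz bound; both routes are equivalent. One tiny wording issue: the center of mass $x^\lambda$ need not lie in $\mathrm{supp}(\omega_2^\lambda)$ itself, only in its convex hull, but your actual estimate $|x-x^\lambda|\le R_0\varepsilon$ for $x\in\mathrm{supp}(\omega_2^\lambda)$ is correct since the convex hull has the same diameter.
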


\begin{proof}
 To prove $(1)$, it suffices to notice that $\omega^\lambda_1$ is bounded in $L^\infty(D)$, then the result follows from $L^p$ estimate and Sobolev embedding.

 Now we turn to the proof of $(2)$. By simple calculation,
\begin{equation}
\begin{split}
\left|E(\omega^\lambda_1)-E(\omega^\mu)\right|=&\left|\frac{1}{2}\int_D\omega^\lambda_1G*\omega^\lambda_1dx- \frac{1}{2}\int_D\omega^\mu G*\omega^\mu dx\right|\\
\leq&\frac{1}{2}\left|\int_D\omega^\lambda_1(G*\omega^\lambda_1-G*\omega^\mu)dx\right|+
\frac{1}{2}\left|\int_DG*\omega^\mu(\omega^\lambda_1-\omega^\mu)dx\right|\\
\leq&\frac{\mu}{2}\left|G*\omega^\lambda_1-G*\omega^\mu\right|_{L^\infty(D)}+o(1),
\end{split}
\end{equation}
which goes to 0 as $\lambda\rightarrow+\infty.$

To prove $(2)$, noting that $diam(supp(\omega^\lambda_2))\rightarrow0$ and $x^\lambda\rightarrow x^\mu$, then we can choose $r^\lambda$, $r^\lambda\rightarrow0$ as $\lambda\rightarrow+\infty$, such that $supp(\omega^\lambda_2)\subset B_{r^\lambda}(x^\mu)$. By the continuity of $G*\omega^\mu$ and the fact $G*\omega^\lambda_1\rightarrow G*\omega^\mu$ in $L^\infty(D)$, it follows that
\begin{equation}
\begin{split}
&\left|\int_DG*\omega^\lambda_1(x)\omega^\lambda_2(x)dx-G*\omega^\mu(x^\mu)\right|\\
=&\left|\int_D(G*\omega^\lambda_1(x)-G*\omega^\mu(x^\mu))\omega^\lambda_2(x)dx\right|\\
=&\left|\int_{B_{r^\lambda}(x^\mu)}(G*\omega^\lambda_1(x)-G*\omega^\mu(x^\mu))\omega^\lambda_2(x)dx\right|\\
\leq&\sup_{x\in B_{r^\lambda}(x^\mu)}|G*\omega^\lambda_1(x)-G*\omega^\mu(x^\mu)|\\
\leq&\sup_{x\in B_{r^\lambda}(x^\mu)}|G*\omega^\lambda_1(x)-G*\omega^\mu(x)|+\sup_{x\in B_{r^\lambda}(x^\mu)}|G*\omega^\mu(x)-G*\omega^\mu(x^\mu)|\\
\rightarrow&0.
\end{split}
\end{equation}
\end{proof}

\section{Proof of Theorem \ref{29}}

In this section we will give proof of Theorem \ref{29}. Before doing this we need to establish several preliminary lemmas first. We will show that the weakly star limit $\omega^\mu\in\overline{\mathcal{N}^\mu}$ of $\omega_1^\lambda$ actually belongs to $\mathcal{N}^\mu$, $x^\mu\in \bar{D}$ actually in $D$ and $(\omega^\mu,x^\mu)$ is a  weak solution to the stationary vortex-wave system \eqref{24}.

\begin{lemma}\label{28}
Let $\omega\in L^\infty(D), x\in D$, then $(\omega,x)$ is a weak solution of \eqref{24} if the following two conditions are satisfied

(1). For any $y\in D$, $G*\omega(y)-H(y)\leq G*\omega(x)-H(x)$.

(2). For any $v\in \mathcal{R}(\omega)$,
\begin{equation}
E(v)+G*v(x)\leq E(\omega)+G*\omega(x).
\end{equation}

\end{lemma}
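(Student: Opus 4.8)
The plan is to verify the two equations in \eqref{25} separately. The second one is immediate from hypothesis (1): it says that $x$ is a global maximum point over $D$ of the map $y \mapsto G*\omega(y) - H(y)$, and since $G*\omega \in C^1(\overline{D})$ while $h$ (hence $H$) is smooth in $D$, this map is $C^1$ near the interior point $x$, so its gradient vanishes at $x$; that is exactly $\nabla G*\omega(x) - \nabla H(x) = 0$.

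The substance of the lemma is the first equation. Write $\psi := G*\omega + G(x,\cdot)$. I would first reduce (2) to a linear maximization: for $v \in \mathcal{R}(\omega)$ set $\eta = v - \omega$, and expand, using the symmetry $G(z,y) = G(y,z)$, to get
\[ \big(E(v) + G*v(x)\big) - \big(E(\omega) + G*\omega(x)\big) = \int_D \eta\,\psi\,dy + \frac{1}{2}\int_D \eta\, G*\eta\,dy . \]
Since $G$ is the Green's function of $-\Delta$ with zero Dirichlet data, $\int_D \eta\, G*\eta\,dy = \int_D |\nabla G*\eta|^2\,dy \geq 0$, so (2) forces $\int_D v\,\psi\,dy \leq \int_D \omega\,\psi\,dy$ for every $v \in \mathcal{R}(\omega)$; in other words $\omega$ maximizes the linear functional $v \mapsto \int_D v\,\psi$ over the rearrangement class $\mathcal{R}(\omega)$. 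Because $\omega \in L^\infty(D)$ and $\psi \in L^r(D)$ for every $r < \infty$ (the singular part $G(x,\cdot)$ has only a logarithmic singularity), Burton's rearrangement theory \cite{B4} yields an increasing function $f$ with $\omega = f(\psi)$ a.e.\ in $D$. Moreover $\psi$ is continuous and non-constant on the connected set $D\setminus\{x\}$ (indeed $-\Delta\psi = \omega + \delta_x$), so its range there is a non-degenerate interval, the $\psi$-preimage of every non-empty open subinterval has positive measure, and hence from $|f(\psi)| = |\omega| \leq \|\omega\|_{L^\infty(D)}$ and monotonicity of $f$ one may take $f$ to be a bounded increasing function on $\mathbb{R}$.

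Next I would pass from $\omega = f(\psi)$ to the weak vorticity equation via the primitive of $f$. Let $F(s) = \int_0^s f(t)\,dt$; since $f$ is bounded, $F$ is convex and globally Lipschitz. As $\nabla G(x,\cdot)$ behaves like $|\cdot - x|^{-1}$ near $x$, one has $\psi \in W^{1,q}_{\mathrm{loc}}(D)$ for every $q < 2$, so the chain rule for Lipschitz compositions gives $F(\psi) \in W^{1,q}_{\mathrm{loc}}(D)$ with $\nabla F(\psi) = f(\psi)\nabla\psi = \omega\nabla\psi$ a.e.\ (the last equality holding even on level sets of $\psi$, where $\nabla\psi = 0$ a.e.). Hence, for any $\phi \in C_c^\infty(D)$, writing $u := F(\psi)$ and integrating by parts twice — legitimate because $u \in W^{1,1}_{\mathrm{loc}}(D)$ and $\partial_1\phi, \partial_2\phi \in C_c^\infty(D)$ —
\[ \int_D \omega\, J\nabla\psi\cdot\nabla\phi\,dy = \int_D J\nabla u\cdot\nabla\phi\,dy = -\int_D u\,\partial_1\partial_2\phi\,dy + \int_D u\,\partial_1\partial_2\phi\,dy = 0 , \]
and since $\psi = G*\omega + G(x,\cdot)$ this is precisely the first equation in \eqref{25}.

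The step I expect to be most delicate is the chain rule together with the integration by parts in the presence of the point-vortex singularity of $\psi$ at $x$: one must ensure $F$ is Lipschitz on the whole (unbounded) range of $\psi$ so that $F(\psi)$ is genuinely a Sobolev function on a neighborhood of $x$ with $\nabla F(\psi) = \omega\nabla\psi \in L^q_{\mathrm{loc}}(D)$ for some $q < 2$, and that the double integration by parts is valid across that singularity. By contrast, the reduction of (2) to the linear maximization and the appeal to Burton's theorem are routine once the integrability of $\psi$ has been noted.
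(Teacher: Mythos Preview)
Your argument is correct but takes a genuinely different route from the paper. The paper does not reduce (2) to a linear maximization or invoke Burton's theorem; instead it uses an Arnold-type flow variation: for each $\phi \in C_c^\infty(D)$ it solves $\dot{\Phi}_t = J\nabla\phi(\Phi_t)$, $\Phi_0 = \mathrm{id}$, and since $J\nabla\phi$ is divergence-free each $\Phi_t$ is area-preserving, so $\omega^{(t)} := \omega\circ\Phi_{-t} \in \mathcal{R}(\omega)$; condition (2) then makes $t=0$ a maximum of $t \mapsto E(\omega^{(t)}) + G*\omega^{(t)}(x)$, and differentiating at $t=0$ (after the change of variables $y\mapsto\Phi_t(y)$) yields the weak vorticity equation directly. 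This is more self-contained --- it needs only Liouville's theorem and a first-order expansion, and the singularity of $G(x,\cdot)$ is harmless because one transports $\omega$ rather than differentiating $\psi$. Your route buys the structural relation $\omega = f(\psi)$ (which the paper obtains only later, in Lemma~\ref{44}, by a separate argument), but you must then handle the chain rule and integration by parts across the point-vortex singularity; note also that the Burton/Ryff equality case gives $\omega = f(\psi)$ a.e.\ only off possible level sets of $\psi$ of positive measure, though this is harmless here since $\nabla\psi = 0$ a.e.\ on such sets and hence $\omega\nabla\psi = \nabla F(\psi)$ survives.
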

\begin{proof}
Condition $(1)$ in Lemma \ref{28} implies that $x$ is a maximum point for the function $G*\omega-H$ in $D$, so $\nabla G*\omega(x)- \nabla H(x)=0$.

In the following, for the sake of convenience  set
\begin{equation}
F(v,y)=E(v)+G*v(y), \,\,v\in \mathcal{R}(\omega),\,y\in D.
\end{equation}
For any given $\phi\in C^{\infty}_0(D)$, define a family of $C^1$ transformations $\Phi_t(x):D \hookrightarrow D$ for $t\in(-\infty,+\infty)$ by the following ordinary differential equation:
\begin{equation}\label{400}
\begin{cases}\frac{d\Phi_t(x)}{dt}=J\nabla\phi(\Phi_t(x)),\,\,\,t\in\mathbb R, \\
\Phi_0(x)=x,
\end{cases}
\end{equation}
where $J$ denotes clockwise rotation through $\frac{\pi}{2}$ as before. Note that $\eqref{400}$ is solvable for all $t$ since $J\nabla\phi$ is a smooth vector field with compact support in $D$. It's easy to see that $J\nabla\phi$ is divergence-free, so by Liouville theorem(see \cite{MPu}, Appendix 1.1) $\Phi_t(x)$ is area-preserving, or equivalently for any measurable set $A\subset D$
\begin{equation}
|\Phi_t(A)|=|A|.
\end{equation}
 Now define a family of test functions
\begin{equation}
\omega^{(t)}(x)\triangleq\omega(\Phi_{-t}(x)).
\end{equation}
Since $\Phi_t$ is area-preserving, we have $\omega^{(t)}\in\mathcal{R}(\omega)$, then condition $(1)$ in Lemma \ref{28} implies that $F(\omega^{(t)},x)$ attains its maximum at $t=0$, so $\frac{d}{dt}F(\omega^{(t)},x)|_{t=0}=0$.
 Expanding $F(\omega^{(t)},x_0)$ at $t=0$ gives
\[\begin{split}
F(\omega^{(t)},x)=&\frac{1}{2}\int_D\int_DG(y,z)\omega(\Phi_{-t}(y))\omega(\Phi_{-t}(z))dydz+\int_DG(x,y)\omega(\Phi_{-t}(y))dy\\
=&\frac{1}{2}\int_D\int_DG(\Phi_{t}(y),\Phi_{t}(z))\omega(y)\omega(z)dydz+\int_DG(x,\Phi_{t}(y))\omega(y)dy\\
=&E(\omega)+t\int_D\omega(y)\nabla (G*\omega(y)+ G(x,y))\cdot J\nabla\phi(y) dy+o(t),
\end{split}\]
as $t\rightarrow 0$. So we have
\[\int_D\omega(y)\nabla (G*\omega(y)+ G(x,y))\cdot J\nabla\phi(y) dy=0,\,\,\forall \phi\in C_c^\infty(D),\]
which completes the proof.
\end{proof}

To apply Lemma \ref{28}, we need more information about $(\omega^\mu,x^\mu)$.

\begin{lemma}\label{41}
$x^\mu\in D$.
\end{lemma}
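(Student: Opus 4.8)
The plan is to show that the limiting vortex center $x^\mu$ cannot escape to the boundary $\partial D$, using the fact that $H$ blows up there together with the uniform energy lower bound from Lemma~\ref{35}. First I would derive an explicit lower bound for the quantity $c^\lambda + $ (something like $\frac{1}{2\pi}\ln\varepsilon$) that is uniform in $\lambda$, which we already have in \eqref{392}: $c^\lambda \geq -\frac{1}{2\pi}\ln\varepsilon - C$. The key is to extract from the maximality of $\omega^\lambda$ a matching statement that involves $H(x^\lambda)$, because $c^\lambda$ should behave (modulo the universal logarithmic term coming from the self-energy of the blob) like $-\frac{1}{2\pi}\ln\varepsilon - 2H(x^\lambda) + G*\omega^\mu(x^\mu) + o(1)$; if $x^\lambda \to \partial D$ then $H(x^\lambda) \to +\infty$ and this would force $c^\lambda$ to be much smaller than the lower bound \eqref{392} permits, a contradiction.

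Concretely, I would estimate $\psi^\lambda$ at the center $x^\lambda$ (or at a point of $\mathrm{supp}(\omega_2^\lambda)$) from above. Writing $\psi^\lambda(x) = G*\omega_1^\lambda(x) + G*\omega_2^\lambda(x)$, the first term is uniformly bounded by Lemma~\ref{700}(1). For the second term, using $G(x,y) = \frac{1}{2\pi}\ln\frac1{|x-y|} - h(x,y)$ and the fact (Lemma~\ref{37}) that $\mathrm{supp}(\omega_2^\lambda)$ has diameter $\leq R_0\varepsilon$, we get $\int_D G(x,y)\omega_2^\lambda(y)\,dy \leq -\frac{1}{2\pi}\ln\varepsilon + C - \inf_{y\in\mathrm{supp}(\omega_2^\lambda)} h(x,y)$, and since $\mathrm{supp}(\omega_2^\lambda)$ shrinks to $x^\mu$ the infimum of $h$ over it converges to $h(x^\mu,x^\mu) = 2H(x^\mu)$ if $x^\mu\in D$, or to $+\infty$ if $x^\mu\in\partial D$ (using $\lim_{x\to\partial D} H(x) = +\infty$ and lower-boundedness/continuity of $h$). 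Combining with $\psi^\lambda(x)\geq c^\lambda$ on $\mathrm{supp}(\omega_2^\lambda)$ and the lower bound \eqref{392}, I obtain $-\frac{1}{2\pi}\ln\varepsilon - C \leq c^\lambda \leq -\frac{1}{2\pi}\ln\varepsilon + C - 2H(x^\lambda) + o(1)$ (after passing suitable sub-quantities), hence $H(x^\lambda) \leq C$ uniformly in $\lambda$. Since $H\to+\infty$ near $\partial D$, this keeps $x^\lambda$ (and thus $x^\mu$) in a compact subset of $D$, so $x^\mu\in D$.

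The main obstacle I anticipate is making the upper bound on $c^\lambda$ genuinely sharp enough: a crude bound $\int_D G(x,y)\omega_2^\lambda(y)dy \leq -\frac{1}{2\pi}\ln\varepsilon + C$ (ignoring the $h$ contribution entirely) only reproduces \eqref{392} and yields nothing. One must retain the $-h(x^\lambda,y)$ term and show it contributes a term that diverges to $-\infty$ as $x^\lambda\to\partial D$. This requires care because $h(x,y)$ is only known to be bounded below and continuous on $D\times D$; the statement that $h(x,x)=2H(x)\to+\infty$ as $x\to\partial D$ is exactly what we invoke, and we need $h(x^\lambda,y)$ for $y$ near $x^\lambda$ to inherit this blow-up. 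A clean way is: for $x^\lambda$ near $\partial D$ and $y\in B_{R_0\varepsilon}(x^\lambda)$, write $h(x^\lambda,y) \geq h(x^\lambda,x^\lambda) - \omega_h(R_0\varepsilon)$ where $\omega_h$ is a local modulus of continuity of $h$ near the (compact, away-from-diagonal-issues) relevant region — but since $h$ is continuous up to... actually $h$ need not extend continuously to $\partial D\times\partial D$, so instead I would argue by contradiction: if along a subsequence $x^\lambda \to x_\infty\in\partial D$, pick points slightly inside, use that $H$ is large there, and transfer that to $h(x^\lambda, \cdot)$ on the shrinking support. Alternatively — and this is probably the intended route given the structure of the paper — compare energies directly: $E(\omega^\lambda) = T(\omega^\lambda) + \frac12\int\omega_1^\lambda\psi^\lambda + \frac{c^\lambda}{2}$ from \eqref{391}, and a refined version of Lemma~\ref{35}'s test-function computation with $x_1$ chosen near a fixed interior point gives $E(\omega^\lambda) \geq -\frac{1}{4\pi}\ln\varepsilon + C$ for a possibly better constant, while the maximality forces $\frac{c^\lambda}{2}$ not to overshoot, pinning $H(x^\lambda)$. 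I would present the contradiction-based version as the cleanest, deferring the delicate $h$-continuity bookkeeping to a short estimate.
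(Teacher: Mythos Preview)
Your approach is correct and lands on the same contradiction as the paper, but via a different intermediate quantity. You bound the Lagrange level $c^\lambda$ from above by evaluating $\psi^\lambda$ at a point of $\mathrm{supp}(\omega_2^\lambda)$ and compare with the lower bound \eqref{392}. The paper instead works directly with the energy: it expands
\[
E(\omega^\lambda)=E(\omega^\lambda_1)+E(\omega^\lambda_2)+\int_D G*\omega^\lambda_1\,\omega^\lambda_2,
\]
uses Lemma~\ref{700} for the first and third pieces, splits $E(\omega_2^\lambda)$ into its logarithmic and $h$-parts, and bounds the logarithmic self-energy by rearrangement to obtain
\[
E(\omega^\lambda)+\tfrac{1}{4\pi}\ln\varepsilon \le E(\omega^\mu)+G*\omega^\mu(x^\mu)-\tfrac{1}{2}\!\int_D\!\!\int_D h(x,y)\,\omega_2^\lambda(x)\omega_2^\lambda(y)\,dx\,dy + C,
\]
which together with Lemma~\ref{35} forces the $h$-integral to stay bounded. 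Since the two routes are linked by \eqref{391}, they are essentially equivalent; the energy route is a little cleaner because it avoids the pointwise evaluation of $\psi^\lambda$ and the associated choice of a base point in $\mathrm{supp}(\omega_2^\lambda)$.

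Your worry about the behavior of $h(x,y)$ when both arguments approach $\partial D$ is well placed, and the paper faces exactly the same issue: it simply writes ``$H(x^\mu)=+\infty$'' in the limit, which tacitly uses that $\frac{1}{2}\int\!\!\int h\,\omega_2^\lambda\omega_2^\lambda \to +\infty$ when the shrinking support drifts to $\partial D$. So you are not missing an ingredient that the paper supplies; both arguments rely on the same (standard, and true for smooth domains) fact that $h(x,y)\to+\infty$ as $x,y\to\partial D$ with $|x-y|\to 0$. If you want to make this step airtight, the cleanest justification is not a modulus-of-continuity argument but the elementary bound $h(x,y)\ge \frac{1}{2\pi}\ln\frac{1}{\mathrm{dist}(x,\partial D)+|x-y|}-C$, which follows from comparing $G$ with the half-plane Green's function after flattening the boundary; on $\mathrm{supp}(\omega_2^\lambda)$ this immediately gives the needed divergence.
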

\begin{proof}
By Lemma \ref{700} and the symmetry of the Green's function,
\begin{equation}\label{703}
\begin{split}
E(\omega^\lambda)=&\frac{1}{2}\int_D\int_DG(x,y)(\omega^\lambda_1+\omega^\lambda_2)(x)(\omega^\lambda_1+\omega^\lambda_2)(y)dxdy\\
=&E(\omega^\lambda_1)+E(\omega^\lambda_2)+\int_D\int_DG(x,y)\omega^\lambda_1(x)\omega^\lambda_2(y)dxdy\\
=&E(\omega^\mu)-\frac{1}{4\pi}\int_D\int_D\ln|x-y|\omega^\lambda_2(x)\omega^\lambda_2(y)dxdy+G*\omega^\mu(x^\mu)-H(x^\mu)+o(1).
\end{split}
\end{equation}
By rearrangement inequality(see \cite{LL}, \S3.4),
\begin{equation}\label{704}
\begin{split}
-\frac{1}{4\pi}\int_D\int_D\ln|x-y|\omega^\lambda_2(x)\omega^\lambda_2(y)dxdy&\leq \sup_{x\in D}-\frac{\lambda}{4\pi}\int_{D}\ln|x-y|\omega^\lambda_2(y)dy\\
&\leq -\frac{\lambda}{4\pi}\int_{B_\varepsilon(0)}\ln|y|dy\\
&\leq -\frac{1}{4\pi}\ln\varepsilon+C.
\end{split}
\end{equation}
So we have
\begin{equation}\label{705}
E(\omega^\lambda)\leq E(\omega^\mu)-\frac{1}{4\pi}\ln\varepsilon+C+G*\omega^\mu(x^\mu)-H(x^\mu)+o(1),
\end{equation}
that is
\begin{equation}\label{706}
E(\omega^\lambda)+\frac{1}{4\pi}\ln\varepsilon\leq E(\omega^\mu)+G*\omega^\mu(x^\mu)-H(x^\mu)+o(1).
\end{equation}
If $x^\mu\in\partial D$, then $H(x^\mu)=+\infty$, which means that $E(\omega^\lambda)+\frac{1}{4\pi}\ln\varepsilon\rightarrow-\infty$ as $\lambda\rightarrow+\infty$, which is a contradiction to Lemma \ref{35}.
\end{proof}

\begin{lemma}\label{42}
$\sup_{v\in \mathcal{N}^\mu}(E(v)+G*v(x^\mu))=\sup_{v\in \overline{\mathcal{N}^\mu}}(E(v)+G*v(x^\mu))$.
\end{lemma}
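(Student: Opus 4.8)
The plan is to show that the two suprema coincide by proving that $\overline{\mathcal{N}^\mu}$ is precisely the set of weak star limits of elements of $\mathcal{N}^\mu$, and that for every $v\in\overline{\mathcal{N}^\mu}$ the functional $v\mapsto E(v)+G*v(x^\mu)$ can be approximated arbitrarily well from within $\mathcal{N}^\mu$, together with the complementary (and easier) observation that since $\mathcal{N}^\mu\subset\overline{\mathcal{N}^\mu}$ the left-hand supremum is at most the right-hand one. So the real content is the inequality
\[
\sup_{v\in \overline{\mathcal{N}^\mu}}\bigl(E(v)+G*v(x^\mu)\bigr)\leq\sup_{v\in \mathcal{N}^\mu}\bigl(E(v)+G*v(x^\mu)\bigr).
\]

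First I would recall (or cite Burton's work on rearrangement classes, e.g. \cite{B4}) the description of $\overline{\mathcal{N}^\mu}$: it is the weak star closure of the set of patches $\mu I_A$ with $\mu|A|=1$, and it consists of all $w\in L^\infty(D)$ with $0\leq w\leq\mu$ a.e. and $\int_D w\,dx=1$; equivalently it is the set of functions whose level-set measures are dominated in the appropriate sense. The key analytic input is that the map $v\mapsto E(v)+G*v(x^\mu)$ is weak star continuous on bounded subsets of $L^\infty(D)$: indeed $E$ is weak star continuous because $G$ is a compact (Hilbert--Schmidt, or simply $L^p$-regularizing) kernel — if $v_n\rightharpoonup v$ weak star then $G*v_n\to G*v$ strongly in, say, $C(\overline D)$ by elliptic regularity, so $\int v_n G*v_n\to\int v G*v$; and $G*v_n(x^\mu)\to G*v(x^\mu)$ for the same reason, since $x^\mu\in D$ by Lemma \ref{41} and $G*v$ is continuous on $\overline D$.

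Given this, I would argue as follows: let $w\in\overline{\mathcal{N}^\mu}$ be arbitrary. By definition of the weak star closure there is a sequence $v_n\in\mathcal{N}^\mu$ with $v_n\rightharpoonup w$ weak star in $L^\infty(D)$. By the weak star continuity just established,
\[
E(v_n)+G*v_n(x^\mu)\longrightarrow E(w)+G*w(x^\mu),
\]
hence $E(w)+G*w(x^\mu)\leq\sup_{v\in\mathcal{N}^\mu}\bigl(E(v)+G*v(x^\mu)\bigr)$. Taking the supremum over $w\in\overline{\mathcal{N}^\mu}$ gives one inequality; the reverse is immediate from $\mathcal{N}^\mu\subset\overline{\mathcal{N}^\mu}$. (Alternatively, if one prefers to avoid discussing the closure abstractly, one can invoke Burton's theorem directly: $E(\cdot)+G*(\cdot)(x^\mu)$ is a weakly sequentially continuous, strictly convex — in fact linear-plus-quadratic — functional, and such functionals attain the same supremum over a rearrangement class and over its weak closure, since the closure is the weak closure of the convex hull intersected with the relevant constraints and the extreme points are exactly the patches.)

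\textbf{Main obstacle.} The only subtle point is the weak star continuity of the \emph{quadratic} term $E$; the linear term $G*v(x^\mu)$ is painless once we know $x^\mu\in D$. For $E$ one must be slightly careful that weak star convergence of $v_n$ to $w$ (not merely weak $L^2$) indeed forces strong convergence of the potentials: this follows from the uniform $L^\infty$ bound $0\le v_n\le\mu$ (so $\{v_n\}$ is bounded in every $L^p$), the $L^p$ estimate $\|G*v_n\|_{W^{2,p}}\le C\|v_n\|_{L^p}$, and the compact embedding $W^{2,p}(D)\hookrightarrow C^{1,\alpha}(\overline D)$ for $p$ large, exactly as already used repeatedly in Section 3. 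With this in hand the splitting $\int v_nG*v_n-\int wG*w=\int v_n(G*v_n-G*w)+\int (v_n-w)G*w$ shows both terms vanish — the first by the strong convergence of potentials against the bounded $v_n$, the second by weak star convergence tested against the fixed $L^1$ function $G*w$. So I do not anticipate a genuine difficulty here; the lemma is essentially a continuity statement dressed up in the language of rearrangement classes.
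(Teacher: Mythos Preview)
Your proposal is correct and follows essentially the same route as the paper: both argue the trivial inequality from $\mathcal{N}^\mu\subset\overline{\mathcal{N}^\mu}$, then for the reverse take an arbitrary $w\in\overline{\mathcal{N}^\mu}$, approximate it weak star by $v_n\in\mathcal{N}^\mu$, and use weak star sequential continuity of $v\mapsto E(v)+G*v(x^\mu)$ to pass to the limit. Your write-up is in fact more detailed than the paper's, which simply asserts the convergence $E(\omega^n)+G*\omega^n(x^\mu)\to E(\omega)+G*\omega(x^\mu)$ without spelling out the compactness argument you give; the explicit description of $\overline{\mathcal{N}^\mu}$ and the alternative Burton-style remark are extras not needed here (the paper only uses that description later, in Lemma~\ref{44}).
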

\begin{proof}
Firstly it is obvious that $\sup_{v\in \mathcal{N}^\mu}(E(v)+G*v(x^\mu))\leq\sup_{v\in \overline{\mathcal{N}^\mu}}(E(v)+G*v(x^\mu))$.

On the other hand, for any ${\omega}\in \overline{\mathcal{N}^\mu}$ we can choose a sequence $\{\omega^n\}\subset\mathcal{N}^\mu$ such that $\omega^n\rightarrow {\omega}$ weakly star in $L^\infty(D)$, then
\[E(\omega^n)+G*\omega^n(x^\mu)\rightarrow E({\omega})+G*{\omega}(x^\mu),\]
which means that $\sup_{v\in \mathcal{N}^\mu}(E(v)+G*v(x^\mu))\geq E({\omega})+G*{\omega}(x^\mu)$. Since ${\omega}\in \overline{\mathcal{N}^\mu}$ is arbitrary, we have
\[\sup_{v\in \mathcal{N}^\mu}(E(v)+G*v(x^\mu))\geq\sup_{v\in \overline{\mathcal{N}^\mu}}(E(v)+G*v(x^\mu)),\]
which completes the proof.
\end{proof}

\begin{lemma}\label{43}
$E(\omega^\mu)+G*\omega^\mu(x^\mu)=\sup_{\omega\in\mathcal{N}^\mu}(E(\omega)+G*\omega(x^\mu))$.
\end{lemma}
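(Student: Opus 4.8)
**Proof proposal for Lemma 4.7 ($E(\omega^\mu)+G*\omega^\mu(x^\mu)=\sup_{\omega\in\mathcal{N}^\mu}(E(\omega)+G*\omega(x^\mu))$).**

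The plan is to establish the two inequalities separately. For the inequality $E(\omega^\mu)+G*\omega^\mu(x^\mu)\le\sup_{\omega\in\mathcal{N}^\mu}(E(\omega)+G*\omega(x^\mu))$, I would first observe that $\omega^\mu\in\overline{\mathcal{N}^\mu}$ (this is how $\omega^\mu$ was produced in Section 3, as a weak-star limit of $\omega_1^\lambda$), so the desired bound is immediate from Lemma 4.6, which identifies $\sup_{\mathcal{N}^\mu}(E(v)+G*v(x^\mu))$ with $\sup_{\overline{\mathcal{N}^\mu}}(E(v)+G*v(x^\mu))$.

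The substance is the reverse inequality $E(\omega^\mu)+G*\omega^\mu(x^\mu)\ge\sup_{\omega\in\mathcal{N}^\mu}(E(\omega)+G*\omega(x^\mu))$. Here the idea is to exploit the maximality of $\omega^\lambda$ in $\mathcal{M}^\lambda$ and pass to the limit. Given an arbitrary $v\in\mathcal{N}^\mu$, I would build a competitor in $\mathcal{M}^\lambda$ of the form $\bar v^\lambda=\bar v_1^\lambda+\bar v_2^\lambda$, where $\bar v_2^\lambda=\lambda I_{B_\varepsilon(x^\lambda)}$ is a small disk of the right mass centered at $x^\lambda$ (with $\lambda\pi\varepsilon^2=1$), and $\bar v_1^\lambda\in\mathcal{N}^\mu$ is chosen as a small perturbation of $v$ that vanishes on $B_\varepsilon(x^\lambda)$ so that $supp(\bar v_1^\lambda)\cap B_\varepsilon(x^\lambda)=\varnothing$; since $diam(supp(\omega_2^\lambda))$ and $\varepsilon\to0$, one can arrange $\bar v_1^\lambda\to v$ weak-star in $L^\infty(D)$ (indeed strongly, by adjusting $v$ on a vanishing set). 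Then $E(\omega^\lambda)\ge E(\bar v^\lambda)$. Expanding $E(\bar v^\lambda)=E(\bar v_1^\lambda)+E(\bar v_2^\lambda)+\int_D G*\bar v_1^\lambda\,\bar v_2^\lambda\,dx$ exactly as in the proof of Lemma 3.3, and noting $E(\bar v_2^\lambda)=-\frac1{4\pi}\ln\varepsilon-H(x^\lambda)+o(1)$ (same computation as \eqref{336}–\eqref{337}, now with the $h$-term producing $-H$), while $\int_D G*\bar v_1^\lambda\,\bar v_2^\lambda\,dx\to G*v(x^\mu)$ and $E(\bar v_1^\lambda)\to E(v)$ by the continuity arguments of Lemma 3.8, I get
\[
E(\omega^\lambda)\ge E(v)+G*v(x^\mu)-\frac1{4\pi}\ln\varepsilon-H(x^\mu)+o(1).
\]
Combining this with the upper bound \eqref{706}, namely $E(\omega^\lambda)+\frac1{4\pi}\ln\varepsilon\le E(\omega^\mu)+G*\omega^\mu(x^\mu)-H(x^\mu)+o(1)$, the $-\frac1{4\pi}\ln\varepsilon$ and $-H(x^\mu)$ terms cancel and letting $\lambda\to\infty$ yields $E(v)+G*v(x^\mu)\le E(\omega^\mu)+G*\omega^\mu(x^\mu)$. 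Since $v\in\mathcal{N}^\mu$ is arbitrary, taking the supremum finishes the proof.

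I expect the main obstacle to be the construction and control of the competitor $\bar v_1^\lambda$: one must simultaneously keep its support disjoint from the shrinking disk $B_\varepsilon(x^\lambda)$, keep it in the rearrangement class $\mathcal{N}^\mu$ (so it is still $\mu I_A$ with $\mu|A|=1$), and ensure $G*\bar v_1^\lambda\to G*v$ uniformly and $E(\bar v_1^\lambda)\to E(v)$. This is a routine but slightly delicate cut-and-paste argument — remove from $A$ the piece inside $B_\varepsilon(x^\lambda)$ and reinsert an equal-measure piece elsewhere in $D\setminus supp(\omega_2^\lambda)$ — whose error is controlled by $|B_\varepsilon(x^\lambda)|=\varepsilon^2\pi\to0$. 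The rest is bookkeeping with the energy splitting and the already-established estimates \eqref{706}, Lemma 3.3, and Lemma 3.8.
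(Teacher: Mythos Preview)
Your approach differs from the paper's in one crucial respect, and this difference introduces a genuine gap. The paper keeps the concentrated part unchanged in the competitor: it sets $v^\lambda = v_1^\lambda + \omega_2^\lambda$ with the \emph{same} $\omega_2^\lambda$ as in the maximizer. Then $E(\omega^\lambda)\ge E(v^\lambda)$ expands to
\[
E(\omega_1^\lambda)+E(\omega_2^\lambda)+\int_D G*\omega_1^\lambda\,\omega_2^\lambda
\;\ge\;
E(v_1^\lambda)+E(\omega_2^\lambda)+\int_D G*v_1^\lambda\,\omega_2^\lambda,
\]
and $E(\omega_2^\lambda)$ cancels \emph{exactly}. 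Passing to the limit via Lemma~\ref{700} gives $E(\omega^\mu)+G*\omega^\mu(x^\mu)\ge E(v)+G*v(x^\mu)$ with no residual constants.

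You instead replace $\omega_2^\lambda$ by a disk $\bar v_2^\lambda=\lambda I_{B_\varepsilon(x^\lambda)}$, so the self-energies no longer cancel. To close the argument you invoke \eqref{706}; but \eqref{706} is obtained from \eqref{704}--\eqref{705} via a rearrangement bound that produces a fixed positive constant (the paper writes $+C$ in \eqref{705} and drops it in \eqref{706} only because, for Lemma~\ref{41}, a bounded error is harmless against $H(x^\mu)=+\infty$). Likewise your claim $E(\bar v_2^\lambda)=-\frac1{4\pi}\ln\varepsilon-H(x^\lambda)+o(1)$ is off by another fixed constant (the scaled Newtonian self-energy of the unit disk). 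These two constants do not coincide, so your ``$-\frac1{4\pi}\ln\varepsilon$ and $-H(x^\mu)$ terms cancel'' step leaves a nonvanishing remainder and the inequality you obtain is $E(v)+G*v(x^\mu)\le E(\omega^\mu)+G*\omega^\mu(x^\mu)+C'$ for some $C'>0$, which is not enough.

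Your route can be repaired: instead of \eqref{706}, use the equality $E(\omega^\lambda)=E(\omega^\mu)+E(\omega_2^\lambda)+G*\omega^\mu(x^\mu)+o(1)$ from Lemma~\ref{700}, and then apply Riesz's rearrangement inequality to the Newtonian part to get $E(\omega_2^\lambda)\le E(\bar v_2^\lambda)+o(1)$ (the $h$-contributions both converge to $-H(x^\mu)$). This yields the exact cancellation you need. That said, the paper's choice of competitor with the same $\omega_2^\lambda$ avoids this extra step entirely and is the cleaner argument.
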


\begin{proof}
Recall that $\omega^\lambda_2=\lambda I_{B^\lambda}$.
By choosing $v^\lambda=v^\lambda_1+\omega^\lambda_2$, such that $v^\lambda_1\in\mathcal {N}^\mu, v^\lambda_1\equiv0$ a.e. on $B^\lambda$, it is obvious that $v^\lambda\in\mathcal{M}^\lambda$. As a consequence we have $E(\omega^\lambda)\geq E(v^\lambda)$, that is,
\begin{equation}
\begin{split}
E(\omega_1^\lambda)+E(\omega_2^\lambda)+\int_DG*\omega_1^\lambda(x)\omega^\lambda_2(x)dx\geq E(v_1^\lambda)+E(\omega_2^\lambda)+\int_DG*v_1^\lambda(x)\omega^\lambda_2(x)dx,
\end{split}
\end{equation}
which gives
\begin{equation}
E(\omega_1^\lambda)+\int_DG*\omega_1^\lambda(x)\omega^\lambda_2(x)dx\geq E(v_1^\lambda)+\int_DG*v_1^\lambda(x)\omega^\lambda_2(x)dx.
\end{equation}
By Lemma \ref{700} it follows

\begin{equation}
E(\omega^\mu)+G*\omega^\mu(x^\mu)\geq E(v_1^\lambda)+G*v_1^\lambda(x^\mu)+o(1).
\end{equation}
Since $diam(supp(\omega^\lambda_2))\rightarrow 0$ and $E$ is a continuous functional on $\mathcal{N}^\mu$, $v^\lambda_1$ can be any element in $\mathcal{N}^\mu$ as $\lambda\rightarrow+\infty$, that is
\begin{equation}
E(\omega^\mu)+G*\omega^\mu(x^\mu)\geq  E(v)+G*v(x^\mu),\,\,\forall v\in\mathcal{N}^\mu,
\end{equation}
which, combined with Lemma \ref{42} leads to the desired result.
\end{proof}

\begin{lemma}\label{44}
$\omega^\mu\in \mathcal{N}^\mu$ and $\omega^\mu=\mu I_{\{G*\omega^\mu+G(x^\mu,\cdot)>b^\mu\}}$ for some $b^\mu>0$.
\end{lemma}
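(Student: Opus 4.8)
The plan is to exploit the variational characterization of $\omega^\mu$ established in Lemma \ref{43}, namely that $\omega^\mu$ maximizes the functional $v\mapsto E(v)+G*v(x^\mu)$ over $\mathcal{N}^\mu$ — or, by Lemma \ref{42}, over the weak-star closure $\overline{\mathcal{N}^\mu}$. The key observation is that $E(v)+G*v(x^\mu) = \frac12\int_D v\,G*v\,dx + \int_D v(x)G(x^\mu,x)\,dx$ is a strictly convex functional of $v$ (the quadratic form $\int v\,G*v$ is positive definite since $G$ is the kernel of $(-\Delta)^{-1}$ with Dirichlet data, and the linear term adds nothing to convexity). First I would invoke Burton's theory (the same machinery underlying Proposition \ref{33}, e.g. \cite{B4}): a strictly convex, weak-star continuous functional attains its maximum over $\overline{\mathcal{N}^\mu}$ only at an extreme point of $\overline{\mathcal{N}^\mu}$, and the extreme points of the weak-star closure of a vortex-patch rearrangement class are precisely the patches themselves, i.e. the elements of $\mathcal{N}^\mu$. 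Since $\omega^\mu$ is a maximizer (Lemma \ref{43}) and the maximizer is unique by strict convexity, $\omega^\mu$ must be an extreme point, hence $\omega^\mu\in\mathcal{N}^\mu$.

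Next I would extract the profile equation. By Burton's structural result for maximizers of this type of functional on a rearrangement class (again Proposition \ref{33}-style), the maximizer satisfies $\omega^\mu = f(G*\omega^\mu + G(x^\mu,\cdot))$ a.e.\ in $D$ for some nondecreasing $f:\mathbb{R}\to\mathbb{R}$; here the relevant "stream function" is $\zeta^\mu := G*\omega^\mu + G(x^\mu,\cdot)$ because the linear term $G*v(x^\mu)=\int v(x)G(x^\mu,x)\,dx$ shifts the effective potential by $G(x^\mu,\cdot)$. Since $\omega^\mu$ takes only the values $0$ and $\mu$ with $\mu|\{\omega^\mu=\mu\}|=1$, and $f$ is nondecreasing, there is a threshold $b^\mu$ with $\{\omega^\mu=\mu\}=\{\zeta^\mu>b^\mu\}$ up to a null set — exactly the argument used in Lemma \ref{34}: on $\{\zeta^\mu>b^\mu\}$ we get $\omega^\mu\equiv\mu$, on $\{\zeta^\mu<b^\mu\}$ we get $\omega^\mu<\mu$ hence $\omega^\mu=0$, and on the level set $\{\zeta^\mu=b^\mu\}$ we have $\nabla\zeta^\mu=0$ a.e., so $\mu I_{\{\zeta^\mu=b^\mu,\,\omega^\mu=\mu\}}$ contributes $-\Delta\zeta^\mu$ on a set where $\nabla\zeta^\mu$ vanishes a.e., forcing that set to be null. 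This yields $\omega^\mu=\mu I_{\{G*\omega^\mu+G(x^\mu,\cdot)>b^\mu\}}$.

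Finally I would check $b^\mu>0$. Since $\omega^\mu\geq0$ and $\omega^\mu\not\equiv0$, the strong maximum principle gives $G*\omega^\mu>0$ in $D$; and $G(x^\mu,\cdot)>0$ in $D\setminus\{x^\mu\}$ with $G(x^\mu,\cdot)\to+\infty$ at $x^\mu$, so $\zeta^\mu>0$ throughout $D$. As the patch $\{\zeta^\mu>b^\mu\}$ has positive measure, $b^\mu$ lies in the range of $\zeta^\mu$ on a set of positive measure, hence can be taken $\geq 0$; to get strict positivity one argues as in Lemma \ref{34}, using that $f$ is increasing and $\zeta^\mu>0$ so the infimum defining the threshold is attained at a value where $\zeta^\mu>0$. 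The main obstacle I anticipate is the clean identification of the extreme points of $\overline{\mathcal{N}^\mu}$ and the rigorous transfer of Burton's maximization-and-profile results from the "pure" rearrangement setting of Proposition \ref{33} to the functional $E(v)+G*v(x^\mu)$ with its extra linear term; once that is in place, the level-set bookkeeping is routine and parallels Lemma \ref{34}. One should also double-check that the fixed point $x^\mu\in D$ (Lemma \ref{41}) is essential here, since the argument breaks down if $G(x^\mu,\cdot)$ is singular on $\partial D$ rather than in the interior.
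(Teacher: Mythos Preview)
Your extreme-point argument for $\omega^\mu\in\mathcal{N}^\mu$ is essentially correct (though note that strict convexity gives uniqueness of \emph{minimizers}, not maximizers; the correct statement is simply that strict convexity forces every maximizer on a convex set to be an extreme point, which is all you need). The real gap is the second step, where you invoke a Burton-type structural result to obtain $\omega^\mu=f(\zeta^\mu)$ with $\zeta^\mu=G*\omega^\mu+G(x^\mu,\cdot)$. The paper explicitly remarks, immediately after this lemma, that Corollary~3.4 of \cite{B4} \emph{cannot} be applied here, because $\nabla^2 G(x^\mu,\cdot)$ is not locally integrable: the effective stream function $\zeta^\mu$ fails to lie in $W^{2,p}(D)$ (its distributional Laplacian is $\omega^\mu+\delta_{x^\mu}$), and both Burton's machinery and the Lemma~\ref{34}-style level-set argument you borrow require that global regularity. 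You flag the transfer of Burton's result to this setting as an anticipated obstacle but treat it as bookkeeping; in fact it is the heart of the difficulty.

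The paper bypasses Burton entirely and argues by a direct Turkington-style variation on the larger convex set $\mathcal{F}=\{0\le\omega\le\mu,\ \int_D\omega=1\}\supset\overline{\mathcal{N}^\mu}$. Testing a maximizer $\tilde\omega$ against perturbations $\tilde\omega+s(z_0-z_1)$ yields the bathtub inequality $\sup_{\{\tilde\omega<\mu\}}\zeta\le\inf_{\{\tilde\omega>0\}}\zeta$ with $\zeta=G*\tilde\omega+G(x^\mu,\cdot)$. The singularity is then confronted head-on: since the left side would be $+\infty$ if $\{\tilde\omega<\mu\}$ met every neighborhood of $x^\mu$, one deduces $\tilde\omega\equiv\mu$ on some ball $B_r(x^\mu)$, excises it, and runs the continuity/connectedness argument on $D_r=D\setminus\overline{B_r(x^\mu)}$ where $\zeta$ is regular; a strong-maximum-principle step then recovers $\zeta>b^\mu$ on $B_r(x^\mu)$ as well. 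This explicit excision of the singular point is precisely the ingredient your proposal is missing.
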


\begin{proof}

Define $\mathcal{F}=\{\omega\in L^\infty(D)\,|\,1\leq\omega\leq \mu,\int_D\omega(x)dx=1\},$ then for $\mathcal{F}$ we
have the following two claims.

Claim 1: $\overline{\mathcal{N}^\mu}\subset\mathcal{F}$.

Proof of Claim 1: By the definition of $\overline{\mathcal{N}^\mu}$ it suffices to show that $\mathcal{F}$ is closed in the weak star topology in $L^\infty(D)$. Let $\omega^n\in \mathcal{F}$, $\omega^n\rightarrow \omega^*$ weakly star in $L^\infty(D)$, that is,

\begin{equation}
\lim_{n\rightarrow +\infty}\int_D\omega^n(x)\phi(x)dx=\int_D\omega^*(x)\phi(x)dx,\,\,\forall \phi\in L^1(D),
\end{equation}
it suffices to show that $\omega^*\in\mathcal{F}$.

Firstly by choosing $\phi(x)\equiv1$ we have
\[\lim_{n\rightarrow +\infty}\int_D\omega^n(x)dx=\int_D\omega^*(x)dx=1.\]

Now we prove $0\leq \omega^*\leq \mu$ by contradiction. Suppose that $|\{\omega^*>\mu\}|>0$, then there exists $\varepsilon_0>0$ such that $|\{\omega^*\geq \mu+\varepsilon_0\}|>0$. Denote $A=\{\omega^*\geq \mu+\varepsilon_0\}$, then for $\phi=I_A$ we have
\[0=\lim_{n\rightarrow +\infty}\int_D(\omega^*-\omega^n)(x)\phi(x)dx=\lim_{n\rightarrow +\infty}\int_{A}\omega^*(x)-\omega^n(x)dx.\]
On the other hand
\[\lim_{n\rightarrow +\infty}\int_{A}(\omega^*-\omega^n)(x)dx\geq\varepsilon_0|A|>0,\]
which is a contradiction. So we have $\omega^*\leq \mu$ a.e. on $D$.

Lastly, a similar argument suggests $\omega^*\geq 0$ a.e. on $D$, which completes the proof of Claim 1.

Claim 2: There exists $\tilde{\omega}\in\mathcal{F}$ such that $E(\tilde{\omega})+G*\tilde{\omega}(x^\mu)=\sup_{\omega\in\mathcal{F}}E(\omega)+G*\omega(x^\mu)$, moreover, any maximizer $\tilde{\omega}$ has the form $\tilde{\omega}=\mu\lambda_{\{G*\tilde{\omega}+G(x^\mu,\cdot)>b^\mu\}}$ for some $b^\mu>0$.

Proof of Claim 2: Firstly we show that $\sup_{\omega\in\mathcal{F}}E(\omega)+G*\omega(x^\mu)<+\infty$. In fact, for any $\omega\in\mathcal{F}$,
\begin{equation}
\begin{split}
E(\omega)+G*\omega(x^\mu)=&\frac{1}{2}\int_D\int_DG(x,y)\omega(x)\omega(y)dxdy+\int_DG(x^\mu,y)\omega(y)dy\\
\leq&\frac{\mu^2}{2}\int_D\int_D|G(x,y)|dxdy+\mu\int_DG|(x^\mu,y)|dy\\
\leq&C,
\end{split}
\end{equation}
where $C$ is a positive number not depending on $\omega$(may depending on $\mu$). Now we choose $\omega^n\in\mathcal{F}$ such that $\omega^n\rightarrow\tilde{\omega}$ and $E(\omega^n)+G*\omega^n(x^\mu)\rightarrow \sup_{\omega\in\mathcal{F}}E(\omega)+G*\omega(x^\mu)$. An argument similar to the one used in Lemma \ref{700} gives
\begin{equation}
E(\tilde{\omega})+G*\tilde{\omega}(x^\mu)=\sup_{\omega\in\mathcal{F}}(E(\omega)+G*\omega(x^\mu)).
\end{equation}

Now we prove that $\tilde{\omega}$ is a vortex patch with the form $\tilde{\omega}=\mu\lambda_{\{G*\tilde{\omega}+G(x^\mu,\cdot)>b^\mu\}}$ for some $b^\mu>0$. Define a family of test functions $\omega^{(s)}(x)=\tilde{\omega}+s[z_0(x)-z_1(x)]$, $s>0$, where $z_0,z_1$ satisfies
\begin{equation}
\begin{cases}
z_0,z_1\in L^\infty(D),\, z_0,z_1\geq 0,\, \int_Dz_0dx=\int_D z_1dx,

 \\z_0=0\text{\,\,\,\,\,\,} in\text{\,\,} D\verb|\|\{\tilde{\omega}\leq \mu-\delta\},
 \\z_1=0\text{\,\,\,\,\,\,} in\text{\,\,} D\verb|\|\{\tilde{\omega}\geq\delta\},
\end{cases}
\end{equation}
here $\delta$ is any positive number. Note that for fixed $z_0,z_1$ and $\delta$, $\omega^{(s)}\in \mathcal{F}$ provided $s$ is sufficiently small(depending on $\delta,z_0,z_1$). So we have
\begin{equation}
\frac{d}{ds}[E(\omega^{(s)})+G*\omega^{(s)}(x^\mu)]\Big|_{s=0^+}\leq 0,
\end{equation}
 which gives

\begin{equation}\label{671}
\sup_{\{\tilde{\omega}<\mu\}}(G*\tilde{\omega}+G(x^\mu,\cdot))\leq\inf_{\{\tilde{\omega}>0\}}(G*\tilde{\omega}+G(x^\mu,\cdot)).
\end{equation}
Now it is obvious that there exists $r>0$ such that $\tilde{\omega}\equiv \mu$ a.e. in $B_r(x^\mu)$(otherwise the left hand side of \eqref{671} equals $+\infty$ ). Moreover, we can choose $r$ sufficiently small such that
\begin{equation}\label{777}
\inf_{\{\tilde{\omega}>0\}}(G*\tilde{\omega}+G(x^\mu,\cdot))=\inf_{\{\tilde{\omega}>0\}\cap D_r}(G*\tilde{\omega}+G(x^\mu,\cdot)),
\end{equation}
where $D_r=D\setminus \overline{B_r(x^\mu)}$. Then we have
\begin{equation}
\sup_{\{\tilde{\omega}<\mu\}\cap D_r}(G*\tilde{\omega}+G(x^\mu,\cdot))\leq\inf_{\{\tilde{\omega}>0\}\cap D_r}(G*\tilde{\omega}+G(x^\mu,\cdot)).
\end{equation}
Since $\overline{D_r}$ is connected (for sufficiently small $r$) and $\overline{\{\tilde{\omega}<\mu\}\cap D_r}\cup\overline{\{\tilde{\omega}>0\}\cap D_r}=\overline{D_r}$, we have $\overline{\{\tilde{\omega}<\mu\}\cap D_r}\cap\overline{\{\tilde{\omega}>0\}\cap D_r}\neq\varnothing$, then by the continuity of $G*\tilde{\omega}+G(x^\mu,\cdot)$ on $\overline{D_r}$,
\begin{equation}
\sup_{\{\tilde{\omega}<\mu\}\cap D_r}(G*\tilde{\omega}+G(x^\mu,\cdot))=\inf_{\{\tilde{\omega}>0\}\cap D_r}(G*\tilde{\omega}+G(x^\mu,\cdot)).
\end{equation}
Now define
\begin{equation}
b^\mu=\sup_{\{\tilde{\omega}<\mu\}\cap D_r}(G*\tilde{\omega}+G(x^\mu,\cdot))=\inf_{\{\tilde{\omega}>0\}\cap D_r}(G*\tilde{\omega}+G(x^\mu,\cdot)),
\end{equation}
by maximum principle it is easy to see that $\mu>0$, and it is also obvious that
\begin{equation}
\begin{cases}
\tilde{\omega}=0\text{\,\,\,\,\,\,$a.e.$\,} in\text{\,\,}\{G*\tilde{\omega}+G(x^\mu,\cdot)<b^\mu\}\cap D_r,
 \\ \tilde{\omega}=\mu\text{\,\,\,\,\,\,$a.e.$\,} in\text{\,\,}\{G*\tilde{\omega}+G(x^\mu,\cdot)>b^\mu\}\cap D_r.
\end{cases}
\end{equation}
On $\{G*\tilde{\omega}+G(x^\mu,\cdot)=b^\mu\}\cap D_r$, we have $\nabla(G*\tilde{\omega}+G(x^\mu,\cdot))=0\text{\,\,} a.e.$, which gives $\tilde{\omega}=-\Delta (G*\tilde{\omega})=-\Delta (G*\tilde{\omega}+G(x^\mu,\cdot))=0$.
Now it remains to show that $G*\tilde{\omega}+G(x^\mu,\cdot)>b^\mu$ on $B_r(x^\mu).$ This is an easy consequence of the maximum principle. In fact, by \eqref{777}

\begin{equation}
\begin{split}
b^\mu&=\inf_{\{\tilde{\omega}>0\}\cap D_r}(G*\tilde{\omega}+G(x^\mu,\cdot)),\\
&=\inf_{\{\tilde{\omega}>0\}}(G*\tilde{\omega}+G(x^\mu,\cdot)),\\
&\leq\inf_{ B_r(x^\mu)}(G*\tilde{\omega}+G(x^\mu,\cdot))\\
&\leq \inf_{\partial B_r(x^\mu)}(G*\tilde{\omega}+G(x^\mu,\cdot)),
\end{split}
\end{equation}
then by strong maximum principle we have $G*\tilde{\omega}+G(x^\mu,\cdot)>b^\mu$ on $B_r(x^\mu)$.

In conclusion, we have proved that $\tilde{\omega}$ has the form $\tilde{\omega}=\mu I_{\{G*\tilde{\omega}+G(x^\mu,\cdot)>b^\mu\}}$ for some $b^\mu>0$, which completes the proof of Claim 2.

Now we proceed to prove Lemma \ref{44}. By Claim 2 it is easy to see that
\begin{equation}
\sup_{\omega\in\mathcal{N}^\mu}(E(\omega)+G*\omega(x^\mu))=\sup_{\omega\in\mathcal{F}}(E(\omega)+G*\omega(x^\mu)),
\end{equation}
therefore we obtain
\[E(\omega^\mu)+G*\omega^\mu(x^\mu)=\sup_{\omega\in\mathcal{F}}(E(\omega)+G*\omega(x^\mu)).\]
Using Claim 2 again we get the desired result.
\end{proof}

\begin{remark}
 Lemma \ref{44} is essential to this paper. We remark that Corollary 3.4 in \cite{B4} can not be applied here anymore since $\nabla^2G$ is not a locally integrable function. The proof we give here is based on the idea of Turkington in \cite{T} with some modifications.
\end{remark}

Now we are ready to prove Theorem \ref{29}.
\begin{proof}[Proof of Theorem \ref{29}]
Firstly by Lemma \ref{43} and Lemma \ref{44}, $\omega^\mu$ satisfies $(2)$ in Lemma \ref{28} and has the form $\omega^\mu=\mu I_{\{G*\omega^\mu+G(x^\mu,\cdot)>b^\mu\}}$ for some $b^\mu>0$. It suffices to show that $x^\mu$ satisfies $(1)$ in Lemma \ref{28}.

Fix $x_1\in D$ and define $v^\lambda=v^\lambda_1+v^\lambda_2$, where $v^\lambda_2=\lambda I_{B_\varepsilon(x_1)}$, $v^\lambda_1\in\mathcal{N}^\mu$ and $v^\lambda_1=0$ a.e. on $B_\varepsilon(x_1)$. It is easy to check that $v^\lambda\in\mathcal{M}^\lambda$, so we have $E(\omega^\lambda)\geq E(v^\lambda)$, that is,
\begin{equation}
E(\omega^\lambda_1)+E(\omega^\lambda_2)+\int_DG*\omega^\lambda_1(x)\omega^\lambda_2(x)dx\geq E(v^\lambda_1)+E(v^\lambda_2)+\int_DG*v^\lambda_1(x)v^\lambda_2(x)dx,
\end{equation}
then by Lemma \ref{700} for $\lambda$ sufficiently large we have

\begin{equation}
\begin{split}
&E(\omega^\mu)-\frac{1}{4\pi}\int_D\int_D\ln|x-y|\omega^\lambda_2(x)\omega^\lambda_2(y)dxdy-H(x^\mu)+G*\omega^\mu(x^\mu)+o(1)\\
&\geq E(v^\lambda_1)-\frac{1}{4\pi}\int_D\int_D\ln|x-y|v^\lambda_2(x)v^\lambda_2(y)dxdy-H(x_1)+G*v^\lambda_1(x_1).
\end{split}
\end{equation}
On the other hand, by Riesz's rearrangement inequality(see \cite{LL}, \S3.7),
\begin{equation}
-\frac{1}{4\pi}\int_D\int_D\ln|x-y|\omega^\lambda_2(x)\omega^\lambda_2(y)dxdy\leq -\frac{1}{4\pi}\int_D\int_D\ln|x-y|v^\lambda_2(x)v^\lambda_2(y)dxdy.
\end{equation}
So we have
\begin{equation}
E(\omega^\mu)-H(x^\mu)+G*\omega^\mu(x^\mu)+o(1)
\geq E(v^\lambda_1)-H(x_1)+G*v^\lambda_1(x_1).
\end{equation}
Again, since $E$ is a continuous functional on $\mathcal{N}^\mu$ and $|B_\varepsilon(x_1)|\rightarrow0$, $v^\lambda_1$ can be any element in $\mathcal{N}^\mu$ as $\lambda\rightarrow+\infty$, that is,
\begin{equation}
E(\omega^\mu)-H(x^\mu)+G*\omega^\mu(x^\mu)
\geq E(v)-H(x_1)+G*v(x_1),\,\,\forall v\in\mathcal{N}^\mu.
\end{equation}
Especially we can choose $v=\omega^\mu$, then it follows
\begin{equation}
-H(x^\mu)+G*\omega^\mu(x^\mu)
\geq -H(x_1)+G*\omega^\mu(x_1),\,\,\forall x_1\in D,
\end{equation}
which means that $x^\mu$ satisfies $(1)$ in Lemma \ref{28}. Therefore we complete the proof.

\end{proof}

\section{Proof of Theorem \ref{90}}

Up to now we have constructed $(\omega^\mu,x^\mu)$ as a steady vortex patch solution to the vortex-wave system for fixed $\mu$. Now we consider the asymptotic behavior of $(\omega^\mu,x^\mu)$ when $\mu\rightarrow +\infty$. As has been stated in Theorem \ref{90}, we will show that both the support of $\omega^\mu$ and $x^\mu$ converge to a minimum point of $H$, which is a stationary solution to the Kirchhoff-Routh equation.

In this section we shall use $C$ to denote various positive numbers independent of $\mu$.
 Theorem \ref{90} is an easy consequence of the following several lemmas.

\begin{lemma}\label{51}
For any $\omega\in\mathcal{N}^\mu, x\in D$, we have \[E(\omega)+G*\omega(x)-H(x)\leq E(\omega^\mu)+G*\omega^\mu(x^\mu)-H(x^\mu).\]
\end{lemma}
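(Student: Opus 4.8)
The statement to prove, Lemma~\ref{51}, asserts that the pair $(\omega^\mu, x^\mu)$ is not merely a critical point but in fact maximizes the functional $(\omega,x)\mapsto E(\omega)+G*\omega(x)-H(x)$ over all $\omega\in\mathcal N^\mu$ and $x\in D$. The plan is to bootstrap from the two optimality properties already established in Section~4. From the proof of Theorem~\ref{29} (specifically the last chain of inequalities there, or equivalently Lemma~\ref{43} combined with the boundary-term analysis in Lemma~\ref{44}) we already know that
\[
E(\omega^\mu)+G*\omega^\mu(x^\mu)-H(x^\mu)\geq E(v)+G*v(x_1)-H(x_1)
\]
for every $v\in\mathcal N^\mu$ and every $x_1\in D$. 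Thus the content of Lemma~\ref{51} is essentially a restatement of an inequality derived en route to Theorem~\ref{29}, and the first step is simply to isolate that inequality cleanly: fix an arbitrary $x_1\in D$, take the test family $v^\lambda=v^\lambda_1+\lambda I_{B_\varepsilon(x_1)}$ with $v^\lambda_1\in\mathcal N^\mu$ vanishing on $B_\varepsilon(x_1)$, use $E(\omega^\lambda)\geq E(v^\lambda)$, cancel the (equal, by Riesz rearrangement) self-interaction terms $-\frac1{4\pi}\iint\ln|x-y|\,\omega^\lambda_2\omega^\lambda_2$ and $-\frac1{4\pi}\iint\ln|x-y|\,v^\lambda_2 v^\lambda_2$, invoke Lemma~\ref{700} to pass to the limit $\lambda\to+\infty$ on the left, and use continuity of $E$ on $\mathcal N^\mu$ together with $|B_\varepsilon(x_1)|\to0$ to let $v^\lambda_1$ range over all of $\mathcal N^\mu$ on the right.

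Carrying this out yields exactly
\[
E(\omega^\mu)+G*\omega^\mu(x^\mu)-H(x^\mu)\geq E(v)+G*v(x_1)-H(x_1),\qquad\forall\,v\in\mathcal N^\mu,\ \forall\,x_1\in D,
\]
which is the claim. Since $x_1$ and $v$ are arbitrary and independent, taking the supremum over both gives the lemma in the stated form. I would organize the proof to reference the relevant pieces already in the paper rather than reproduce the estimates: the key inputs are (i) $\omega^\lambda$ maximizes $E$ over $\mathcal M^\lambda$, (ii) the Riesz rearrangement inequality controlling the $\lambda^2$-order self-energy of $\omega^\lambda_2$ against that of $v^\lambda_2$, and (iii) Lemma~\ref{700}(2)--(3) which converts the $\lambda$-dependent cross terms and $E(\omega^\lambda_1)$ into $E(\omega^\mu)+G*\omega^\mu(x^\mu)+o(1)$, together with the appearance of $-H(x^\mu)$ from $E(\omega^\lambda_2)$ via $-\frac12\iint h\,\omega^\lambda_2\omega^\lambda_2\to -H(x^\mu)$.

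The only genuine subtlety—and the step I would flag as the main obstacle—is the handling of the regular-part term $-\frac12\iint_D\int_D h(x,y)\,\omega^\lambda_2(x)\omega^\lambda_2(y)\,dx\,dy$: one must argue that, because $\mathrm{diam}(\mathrm{supp}\,\omega^\lambda_2)\to0$ (Lemma~\ref{37}) and $x^\lambda\to x^\mu$, this term converges to $-\frac12 h(x^\mu,x^\mu)=-H(x^\mu)$ by uniform continuity of $h$ near the diagonal at $(x^\mu,x^\mu)$, which is legitimate precisely because $x^\mu\in D$ by Lemma~\ref{41}. Likewise, on the right-hand side the corresponding term for $v^\lambda_2$ converges to $-H(x_1)$. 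Once these identifications are in place, the inequality drops out and no further work is needed; the argument is a careful reorganization of material from Section~4 rather than anything new.
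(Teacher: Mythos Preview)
Your proposal is correct and follows essentially the same approach as the paper's proof: construct the competitor $v^\lambda=v^\lambda_1+\lambda I_{B_\varepsilon(x_1)}$, compare $E(\omega^\lambda)\geq E(v^\lambda)$, apply Lemma~\ref{700} to the cross terms and $E(\omega^\lambda_1)$, use Riesz's rearrangement inequality on the logarithmic self-interactions, and let $\lambda\to+\infty$ so that $v^\lambda_1$ ranges over all of $\mathcal N^\mu$. One small slip: the self-interaction terms are not ``equal'' by Riesz rearrangement---the inequality $-\frac{1}{4\pi}\iint\ln|y-z|\,\omega^\lambda_2\omega^\lambda_2\leq -\frac{1}{4\pi}\iint\ln|y-z|\,v^\lambda_2 v^\lambda_2$ goes in the right direction to be dropped, exactly as you describe later in point (ii).
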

\begin{proof}
For fixed $x\in D$, define a family of test functions $v^\lambda=v^\lambda_1+v^\lambda_2$, $v^\lambda_2=\lambda I_{B_\varepsilon(x)}$, $v^\lambda_1\in \mathcal{N}^\mu$ and $v^\lambda_1=0$ a.e. on $B_\varepsilon(x)$. It is easy to check $v^\lambda\in \mathcal{M}^\lambda$, then by definition
$E(v^\lambda)\leq E(\omega^\lambda)$, that is,

\begin{equation}
E(v^\lambda_1)+E(v^\lambda_2)+\int_DG*v^\lambda_1(y)v^\lambda_2(y)dy\leq E(\omega^\lambda_1)+E(\omega^\lambda_2)+\int_DG*\omega^\lambda_1(y)\omega^\lambda_2(y)dy,
\end{equation}
again by Lemma \ref{700}
\begin{equation}\label{22222}
\begin{split}
&E(v^\lambda_1)-\frac{1}{4\pi}\int_D\int_D\ln|y-z|v^\lambda_2(y)v^\lambda_2(z)dydz-H(x)+G*v^\lambda_1(x)\\
\leq &E(\omega^\mu)-\frac{1}{4\pi}\int_D\int_D\ln|y-z|\omega^\lambda_2(y)\omega^\lambda_2(z)dydz-H(x^\mu)+G*\omega^\mu(x^\mu)+o(1),
\end{split}
\end{equation}
where $o(1)\rightarrow0$ as $\lambda\rightarrow+\infty$.
Using Riesz's rearrangement inequality, from \eqref{22222}
 we have
\begin{equation}
E(v^\lambda_1)-H(x)+G*v^\lambda_1(x)\leq E(\omega^\mu)-H(x^\mu)+G*\omega^\mu(x^\mu)+o(1).
\end{equation}
As $\lambda\rightarrow+\infty, v^\lambda_1$ can be any element in $\mathcal{N}^\mu$, so we obtain
\begin{equation}
E(\omega)-H(x)+G*\omega(x)\leq E(\omega^\mu)-H(x^\mu)+G*\omega^\mu(x^\mu),\,\,\text{for all}\, (\omega,x)\in (\mathcal{N}^\mu, D).
\end{equation}
\end{proof}

\begin{remark}
One can also maximize $E(\omega)+G*\omega(x)-H(x)$ for $(\omega,x)\in (\mathcal{N}^\mu, D)$ to obtain steady solution to the vortex-wave system, but it is much more interesting to construct solutions from the Euler equation, because the vortex-wave itself is an approximation of the Euler equation when a part of the vorticity is sufficiently concentrated.
\end{remark}

In the following $s$ will be the positive number defined by $\mu\pi s^2=1$.

\begin{lemma}
There exists $\delta_0>0,$ not depending on $\mu$, such that $dist(x^\mu,\partial D)>\delta_0$.
\end{lemma}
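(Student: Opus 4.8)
The plan is to show that $x^\mu$ stays away from the boundary by using the lower bound on the energy functional together with the blow-up of $H$ near $\partial D$, mimicking the argument of Lemma~\ref{41} but tracking the $\mu$-dependence carefully. First I would record the analogue of the energy identity \eqref{703}: decomposing $\omega^\mu = \omega^\mu$ (there is no splitting now, but the role of the concentrated part is played by $\omega^\mu$ itself relative to the scale $s$), I expect an inequality of the shape
\begin{equation}
E(\omega^\mu) + G*\omega^\mu(x^\mu) - H(x^\mu) \geq E(v) + G*v(x) - H(x)
\end{equation}
for a well-chosen competitor $(v,x)\in(\mathcal{N}^\mu,D)$, which is exactly Lemma~\ref{51}. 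The key is to pick $x$ to be a minimum point $x^*$ of $H$ (which lies in $D$ since $H\to+\infty$ on $\partial D$) and $v = \mu I_{B_s(x^*)}$, and then to estimate $E(v)+G*v(x^*)$ from below. Since $v$ is a patch of radius $s$ centered at $x^*$, $E(v)$ behaves like $-\frac{1}{4\pi}\ln s - C$ by the same computation as in Lemma~\ref{35} (using $\mu\pi s^2 = 1$ and the boundedness of $h$ near $x^*$), and $G*v(x^*) = -\frac{1}{2\pi}\ln s - C + o(1)$ by a similar rearrangement estimate. Hence the right-hand side is bounded below by $-\frac{3}{4\pi}\ln s - C$, so
\begin{equation}
E(\omega^\mu) + G*\omega^\mu(x^\mu) - H(x^\mu) \geq -\frac{3}{4\pi}\ln s - C.
\end{equation}

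Next I would bound the left-hand side from above in terms of $H(x^\mu)$. The terms $E(\omega^\mu)$ and $G*\omega^\mu(x^\mu)$ must be controlled: writing $G = \frac{1}{2\pi}\ln\frac{1}{|x-y|} - h$, the singular contributions give $E(\omega^\mu) \leq -\frac{1}{4\pi}\int\!\int \ln|x-y|\,\omega^\mu(x)\omega^\mu(y)\,dx\,dy + C$, and by the rearrangement (bathtub) inequality this double integral is maximized when $\omega^\mu$ is the patch of radius $s$, yielding $E(\omega^\mu) \leq -\frac{1}{4\pi}\ln s + C$. Similarly $G*\omega^\mu(x^\mu) \leq -\frac{1}{2\pi}\ln s + C$. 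Combining,
\begin{equation}
-\frac{3}{4\pi}\ln s - C \leq -\frac{3}{4\pi}\ln s + C - H(x^\mu),
\end{equation}
so $H(x^\mu) \leq C$ with $C$ independent of $\mu$. Since $H$ is continuous on $D$ and $H(x)\to+\infty$ as $x\to\partial D$, there is $\delta_0 > 0$ (depending only on this uniform bound $C$, hence on $D$ alone) such that $H(y)\leq C$ forces $\mathrm{dist}(y,\partial D) > \delta_0$. Applying this to $y = x^\mu$ gives $\mathrm{dist}(x^\mu,\partial D) > \delta_0$.

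The main obstacle I anticipate is making the two-sided matching of the $-\frac{3}{4\pi}\ln s$ terms truly clean: one has to be careful that the lower bound on $E(v)+G*v(x^*)$ and the upper bound on $E(\omega^\mu)+G*\omega^\mu(x^\mu)$ have the \emph{same} leading coefficient, so that the divergent parts cancel and only an $O(1)$ remainder competes with $H(x^\mu)$. This requires that the self-energy $E$ and the potential-at-the-center term $G*\cdot(\text{center})$ be estimated with matching constants for both the maximizer and the explicit competitor; the rearrangement inequality gives the upper bounds, and the explicit ball computation (together with $h$ being bounded above near $x^*$ and bounded below everywhere) gives the lower bounds. A secondary subtlety is the $o(1)$ terms: in Lemma~\ref{51} these came from the $\lambda\to\infty$ limit and are already absorbed, so at the level of $(\omega^\mu,x^\mu)$ everything is a clean inequality, but one should double-check that no hidden $\mu$-dependence sneaks into the constants $C$ via the regular part $h$ — this is fine because $h$ is globally bounded below on $D\times D$ and bounded above on compact subsets, and the competitor is centered at the fixed point $x^*$.
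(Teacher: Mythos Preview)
Your proof is correct and follows essentially the same strategy as the paper: apply Lemma~\ref{51} with a ball competitor $\bar\omega^\mu=\mu I_{B_s(x_1)}$ centered at a fixed interior point, arrange that the divergent logarithmic contributions on both sides cancel, and conclude $H(x^\mu)\le C$ independently of $\mu$.

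The only methodological difference is in how the cancellation is executed. You estimate the singular parts of $E$ and of $G*\cdot(\text{center})$ separately on each side and then match the coefficients $-\tfrac{1}{4\pi}\ln s$ and $-\tfrac{1}{2\pi}\ln s$; this is why you flag the ``two-sided matching'' as a potential obstacle. The paper sidesteps that worry entirely: it applies Riesz's rearrangement inequality directly to the inequality from Lemma~\ref{51}, so that the logarithmic self-energy and potential terms of $\omega^\mu$ are dominated by those of the radial competitor $\bar\omega^\mu$ and drop out in one stroke. What remains is an inequality involving only the regular part $h$, namely
\[
-4H(x_1)+o(1)\ \le\ -\tfrac12\!\int\!\!\int h(x,y)\omega^\mu(x)\omega^\mu(y)\,dx\,dy-\!\int h(x^\mu,y)\omega^\mu(y)\,dy-H(x^\mu),
\]
and the global lower bound on $h$ finishes immediately. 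Your route gives the same bound but requires keeping track of constants; the paper's use of rearrangement to cancel rather than to bound is cleaner and removes exactly the subtlety you identified.
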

\begin{proof}
Fix $x_1\in D$ and define $\bar{\omega}^\mu=\mu I_{B_s(x_1)}$, then $\bar{\omega}^\mu\in\mathcal{N}^\mu$, by Lemma \ref{51}
\begin{equation}
E(\bar{\omega}^\mu)+G*\bar{\omega}^\mu(x_1)-H(x_1)\leq E(\omega^\mu)+G*\omega^\mu(x^\mu)-H(x^\mu).
\end{equation}
Using Riesz's rearrangement inequality we get
\begin{equation}
\begin{split}
&-H(x_1)-2H(x_1)-H(x_1)+o(1)\\
\leq& -\frac{1}{2}\int_D\int_Dh(x,y)\omega^\mu(x)\omega^\mu(y)dxdy-\int_Dh(x^\mu,y)\omega^\mu(y)dy-H(x^\mu),
\end{split}
\end{equation}
since $h$ is bounded from below in $D\times D$, we have
\begin{equation}
H(x^\mu)\leq C,
\end{equation}
then we get the desired result by the fact $\lim_{x\rightarrow\partial D}H(x)=+\infty$.
\end{proof}

\begin{lemma}\label{59}
$G*\omega^\mu(x^\mu)\geq -\frac{1}{2\pi}\ln s-C$.
\end{lemma}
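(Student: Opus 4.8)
The plan is to feed a sharply–concentrated competitor into the variational inequality of Lemma \ref{51}, and then to isolate $G*\omega^\mu(x^\mu)$ by estimating the self-energy $E(\omega^\mu)$ and the Robin term $H(x^\mu)$ from the appropriate sides. Concretely, fix any interior point $x_1\in D$; for $\mu$ large enough that $B_s(x_1)\subset D$, the function $\bar{\omega}^\mu:=\mu I_{B_s(x_1)}$ lies in $\mathcal{N}^\mu$ because $\mu\pi s^2=1$, so Lemma \ref{51} with $\omega=\bar{\omega}^\mu$ and $x=x_1$ gives
\[
E(\bar{\omega}^\mu)+G*\bar{\omega}^\mu(x_1)-H(x_1)\le E(\omega^\mu)+G*\omega^\mu(x^\mu)-H(x^\mu).
\]
Writing $G(x,y)=\frac{1}{2\pi}\ln\frac{1}{|x-y|}-h(x,y)$, using the local boundedness of the continuous function $h$ near $x_1$, the scaling $z\mapsto sz$, and $\mu\pi s^2=1$, a direct computation yields $E(\bar{\omega}^\mu)=-\frac{1}{4\pi}\ln s+O(1)$ and $G*\bar{\omega}^\mu(x_1)=-\frac{1}{2\pi}\ln s+O(1)$ as $\mu\to+\infty$; since $H(x_1)$ is a fixed constant, the left-hand side above is at least $-\frac{3}{4\pi}\ln s-C$.

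Next I would bound the right-hand side in the opposite direction. Since $h$ is bounded below on $D\times D$, we have $-H(x^\mu)=-\frac{1}{2}h(x^\mu,x^\mu)\le C$. For the self-energy, writing $\omega^\mu=\mu I_{A^\mu}$ with $|A^\mu|=\pi s^2$ and splitting $G$ as above,
\[
E(\omega^\mu)=\frac{\mu^2}{4\pi}\int_{A^\mu}\!\int_{A^\mu}\ln\frac{1}{|x-y|}\,dx\,dy-\frac{\mu^2}{2}\int_{A^\mu}\!\int_{A^\mu}h(x,y)\,dx\,dy,
\]
where the second term is $\le C$ (again $h$ bounded below, $\mu|A^\mu|=1$), while for the first, Riesz's rearrangement inequality (applied after adding a constant so the kernel is nonnegative on $D\times D$, cf. \cite{LL}, \S3.7) gives $\int_{A^\mu}\int_{A^\mu}\ln\frac{1}{|x-y|}\,dx\,dy\le\int_{B_s(0)}\int_{B_s(0)}\ln\frac{1}{|x-y|}\,dx\,dy=-s^4[\pi^2\ln s+O(1)]$; hence $E(\omega^\mu)\le-\frac{1}{4\pi}\ln s+C$.

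Combining these with the displayed inequality from Lemma \ref{51} and collecting constants,
\[
G*\omega^\mu(x^\mu)\ge-\frac{3}{4\pi}\ln s-C-E(\omega^\mu)+H(x^\mu)\ge-\frac{3}{4\pi}\ln s+\frac{1}{4\pi}\ln s-C=-\frac{1}{2\pi}\ln s-C,
\]
which is the claim (for the finitely many small values of $\mu$ not covered one just enlarges $C$, since $\ln s$ is then bounded). The step that requires real care — and what I would flag as the main obstacle — is the bookkeeping of the logarithmic coefficients: Lemma \ref{51} only controls the combination $E(\omega^\mu)+G*\omega^\mu(x^\mu)-H(x^\mu)$, so extracting the sharp $-\frac{1}{2\pi}\ln s$ growth of $G*\omega^\mu(x^\mu)$ alone hinges on the matching upper bound $E(\omega^\mu)\le-\frac{1}{4\pi}\ln s+C$, for which both the lower boundedness of $h$ and Riesz's inequality are essential.
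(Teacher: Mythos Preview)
Your argument is correct, but it differs from the paper's in two linked choices. The paper invokes Lemma~\ref{43} (the maximality of $E(\cdot)+G*\cdot(x^\mu)$ over $\mathcal{N}^\mu$ at the \emph{fixed} point $x^\mu$) rather than Lemma~\ref{51}, and accordingly centers the test disk at $x^\mu$ itself, $\bar{\omega}^\mu=\mu I_{B_s(x^\mu)}$ (this is legitimate because the preceding lemma gives $\mathrm{dist}(x^\mu,\partial D)>\delta_0$). The payoff is that Riesz's inequality then kills the logarithmic parts of $E$ in a single stroke: since $\bar{\omega}^\mu$ is the symmetric decreasing rearrangement of $\omega^\mu$, one has $-\frac{1}{4\pi}\int\!\!\int\ln|x-y|\,\bar{\omega}^\mu\bar{\omega}^\mu\ge -\frac{1}{4\pi}\int\!\!\int\ln|x-y|\,\omega^\mu\omega^\mu$, so those terms drop out and only the $h$-contributions (bounded via $h\ge\inf h$ and the interior location of $x^\mu$) and the explicit quantity $-\frac{1}{2\pi}\int_{B_s(0)}\ln|y|\,\mu\,dy=-\frac{1}{2\pi}\ln s+O(1)$ remain. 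Your route instead uses Lemma~\ref{51} with a fixed $x_1$, which forces you to produce a separate upper bound $E(\omega^\mu)\le -\frac{1}{4\pi}\ln s+C$ (again by Riesz) and to balance the $-\frac{3}{4\pi}\ln s$ against the $\frac{1}{4\pi}\ln s$; this is more bookkeeping but yields the useful standalone estimate on $E(\omega^\mu)$ and avoids needing $B_s(x^\mu)\subset D$. Both approaches rely on the same two ingredients you flagged---Riesz and the lower bound on $h$---just organized differently.
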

\begin{proof}
Since $dist(x^\mu,\partial D)>\delta_0$, we can define $\bar{\omega}^\mu=\mu I_{B_s(x^\mu)}\in \mathcal{N}^\mu$, then by Lemma \ref{43}
\begin{equation}
E(\bar{\omega}^\mu)+G*\bar{\omega}^\mu(x^\mu)\leq E(\omega^\mu)+G*\omega^\mu(x^\mu).
\end{equation}
Again by Riesz's rearrangement inequality we have
\begin{equation}\begin{split}
&-\frac{1}{2}\int_D\int_Dh(x,y)\bar{\omega}^\mu(x)\bar{\omega}^\mu(y)dxdy-\frac{1}{2\pi}\int_D\ln|x^\mu-y|\bar{\omega}^\mu(y)dy-\int_Dh(x^\mu,y)\bar{\omega}^\mu(y)dy\\
\leq& -\frac{1}{2}\int_D\int_Dh(x,y){\omega}^\mu(x){\omega}^\mu(y)dxdy+G*\omega^\mu(x^\mu),
\end{split}
\end{equation}
since $h$ is bounded from below in $D\times D$ and $x^\mu$ is away from $\partial D$, we get
\begin{equation}\begin{split}
G*\omega^\mu(x^\mu)&\geq -C-H(x^\mu)-\frac{1}{2\pi}\int_D\ln|x^\mu-y|\bar{\omega}^\mu(y)dy-2H(x^\mu)\\
&\geq -\frac{\mu}{2\pi}\int_{B_s(0)}\ln|y|dy-C\\
&\geq -\frac{1}{2\pi}\ln s-C,
\end{split}
\end{equation}
where we use $\int_{B_s(0)}\ln|y|dy=\pi s^2(\ln s-\frac{1}{2})$.
\end{proof}

\begin{lemma}\label{88}
There exists $\rho^\mu$ satisfying $\rho^\mu\rightarrow0$ and $\int_{B_{\rho^\mu}(x^\mu)}\omega^\mu(x)dx\rightarrow 1$ as $\mu\rightarrow+\infty$.
\end{lemma}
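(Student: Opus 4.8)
The plan is to mimic the argument of Lemma \ref{37}, but now with $\omega^\mu$ (the single patch of strength $\mu$) playing the role that $\omega_2^\lambda$ played there, and with $\mu\to+\infty$ replacing $\lambda\to+\infty$. The key quantitative input is the lower bound $G*\omega^\mu(x^\mu)\geq -\frac{1}{2\pi}\ln s-C$ from Lemma \ref{59}. Since $dist(x^\mu,\partial D)>\delta_0$, the regular part $h(x^\mu,y)$ is bounded on the relevant region, so the lower bound on $G*\omega^\mu(x^\mu)=\frac1{2\pi}\int_D\ln\frac1{|x^\mu-y|}\omega^\mu(y)dy-\int_D h(x^\mu,y)\omega^\mu(y)dy$ translates into
\begin{equation}
\int_D\ln\frac{s}{|x^\mu-y|}\,\omega^\mu(y)\,dy\geq -C.
\end{equation}

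Next I would split this integral at radius $R s$ for a large parameter $R>1$ to be chosen. On $B_{Rs}(x^\mu)$ the rearrangement inequality gives
\begin{equation}
\int_{B_{Rs}(x^\mu)}\ln\frac{s}{|x^\mu-y|}\,\omega^\mu(y)\,dy\leq \mu\int_{B_s(0)}\ln\frac{s}{|y|}\,dy=\frac12,
\end{equation}
exactly as in \eqref{1080}. Subtracting, we get $\int_{D\setminus B_{Rs}(x^\mu)}\ln\frac{s}{|x^\mu-y|}\omega^\mu(y)dy\geq -C$; but on $D\setminus B_{Rs}(x^\mu)$ one has $\ln\frac{s}{|x^\mu-y|}\leq \ln\frac1R<0$, whence $\int_{D\setminus B_{Rs}(x^\mu)}\omega^\mu(y)dy\leq \frac{C}{\ln R}$, i.e. $\int_{B_{Rs}(x^\mu)}\omega^\mu(y)dy\geq 1-\frac{C}{\ln R}$, uniformly in $\mu$.

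Finally I would upgrade this "fixed $R$" statement to a "$\rho^\mu\to0$" statement by a diagonal-type choice: for each integer $n$ pick $R_n$ with $C/\ln R_n<1/n$; then there is $\mu_n$ such that for $\mu\geq\mu_n$ we have $R_n s(\mu)<1/n$ (since $s=s(\mu)\to0$ as $\mu\to\infty$), and set $\rho^\mu=1/n$ for $\mu\in[\mu_n,\mu_{n+1})$. With this choice $\rho^\mu\to0$ and $\int_{B_{\rho^\mu}(x^\mu)}\omega^\mu(x)dx\geq 1-1/n\to1$; since the total mass is $1$, the integral in fact tends to $1$. I expect the only mildly delicate point to be ensuring all constants $C$ genuinely do not depend on $\mu$ — this hinges on the uniform lower bound $dist(x^\mu,\partial D)>\delta_0$ (so that $h$ is controlled near $x^\mu$) together with Lemma \ref{59}; once those are in hand the rest is routine, and Theorem \ref{90} then follows by taking $x^*$ to be a subsequential limit of $x^\mu$ and combining this lemma with Lemma \ref{51} to identify $x^*$ as a minimum point of $H$.
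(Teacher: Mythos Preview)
Your proposal is correct and follows essentially the same route as the paper: start from Lemma \ref{59}, strip off the regular part to get $\int_D\ln\frac{s}{|x^\mu-y|}\omega^\mu(y)\,dy\geq -C$, split at radius $Rs$, bound the inner piece by $\frac12$ via rearrangement, and deduce $\int_{D\setminus B_{Rs}(x^\mu)}\omega^\mu\leq C/\ln R$. The only cosmetic difference is in the last step: rather than a diagonal selection, the paper simply sets $R=s^{-1/2}$ and $\rho^\mu=Rs=s^{1/2}$, which gives $\rho^\mu\to0$ and $C/\ln R\to0$ directly.
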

\begin{proof}
By Lemma \ref{59},
\begin{equation}
-\frac{1}{2\pi}\int_D\ln|x^\mu-y|\omega^\mu(y)dy-\int_Dh(x^\mu,y)\omega^\mu(y)dy\geq -\frac{1}{2\pi}\ln s-C,
\end{equation}
since $h$ is bounded from below in $D\times D$, we get
\begin{equation}
\int_D\ln\frac{s}{|x^\mu-y|}\omega^\mu(y)dy\geq -C.
\end{equation}
Now choose $R>1$ to be determined, we have
\begin{equation}
\int_{B_{Rs}(x^a)}\ln\frac{s}{|x^\mu-y|}\omega^\mu(y)dy+\int_{D\setminus B_{Rs}(x^\mu)}\ln\frac{s}{|x^\mu-y|}\omega^\mu(y)dy\geq -C.
\end{equation}
Observe that
\begin{equation}
\int_{B_{Rs}(x^\mu)}\ln\frac{s}{|x^\mu-y|}\omega^\mu(y)dy\leq  \mu\int_{B_{s}(x^\mu)}\ln\frac{s}{|x^\mu-y|}dy=\frac{1}{2},
\end{equation}
so we get
\begin{equation}
\int_{D\setminus B_{Rs}(x^\mu)}\ln\frac{s}{Rs}\omega^\mu(y)dy\geq\int_{D\setminus B_{Rs}(x^\mu)}\ln\frac{s}{|x^\mu-y|}\omega^\mu(y)dy\geq -C,
\end{equation}
which gives
\begin{equation}
\int_{D\setminus B_{Rs}(x^\mu)}\omega^\mu(y)dy\leq\frac{C}{\ln R},
\end{equation}
but $\int_D\omega^\mu(x)dx=1$, we have
\begin{equation}
1\geq\int_{ B_{Rs}(x^\mu)}\omega^\mu(y)dy\geq 1-\frac{C}{\ln R},
\end{equation}
then the lemma is proved by choosing $R=s^{-\frac{1}{2}}$ and $\rho^\mu=s^\frac{1}{2}$.
\end{proof}

Since ${x^\mu}$ is bounded and away from $\partial D$, we assume that $x^\mu\rightarrow x^*\in D$(up to a subsequence) as $\mu\rightarrow+\infty$. An argument similar to the one in Remark \ref{457} shows that $\omega^\mu\rightarrow\delta(x^*)$ in the distributional sense.

\begin{lemma}\label{1000}
$H(x^*)=\min_{x\in D}H(x).$
\end{lemma}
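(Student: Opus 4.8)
The plan is to test the global inequality of Lemma~\ref{51} against the model vortex patch $\bar\omega^\mu_0:=\mu I_{B_s(x_0)}$ centred at an \emph{arbitrary} point $x_0\in D$, and to show that once the logarithmically divergent self-interaction terms are isolated — these being identical, up to $o(1)$, for $\bar\omega^\mu_0$ and for $\omega^\mu$ — the inequality collapses to $H(x^*)\le H(x_0)$.

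Fix $x_0\in D$. Since $s\to0$ as $\mu\to+\infty$, for $\mu$ large one has $B_s(x_0)\subset D$, hence $\bar\omega^\mu_0\in\mathcal N^\mu$, and Lemma~\ref{51} with $\omega=\bar\omega^\mu_0$, $x=x_0$ gives
\begin{equation*}
E(\bar\omega^\mu_0)+G*\bar\omega^\mu_0(x_0)-H(x_0)\ \le\ E(\omega^\mu)+G*\omega^\mu(x^\mu)-H(x^\mu).
\end{equation*}
Using $G=\tfrac1{2\pi}\ln\tfrac1{|\cdot|}-h$, the elementary identities $\int_{B_s(0)}\ln\tfrac1{|z|}\,dz=\pi s^2(\tfrac12-\ln s)$ and $\int_{B_s(0)}\int_{B_s(0)}\ln\tfrac1{|z-w|}\,dz\,dw=-s^4(\pi^2\ln s+a_0)$ with $a_0:=\int_{B_1(0)}\int_{B_1(0)}\ln|z-w|\,dz\,dw$, the normalisation $\mu\pi s^2=1$, and the continuity of $h$ at $(x_0,x_0)$, a direct computation yields $E(\bar\omega^\mu_0)=-\tfrac1{4\pi}\ln s-\tfrac{a_0}{4\pi^3}-H(x_0)+o(1)$ and $G*\bar\omega^\mu_0(x_0)=-\tfrac1{2\pi}\ln s+\tfrac1{4\pi}-2H(x_0)+o(1)$, so the left-hand side above equals $-\tfrac3{4\pi}\ln s+K-4H(x_0)+o(1)$, where $K:=\tfrac1{4\pi}-\tfrac{a_0}{4\pi^3}$ is a constant independent of $x_0$ and $\mu$.

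For the right-hand side I would bound each of $E(\omega^\mu)$ and $G*\omega^\mu(x^\mu)$ from above by separating Newtonian and regular parts. The Newtonian parts are controlled by the \emph{sharp} rearrangement bounds, which are exactly the corresponding quantities for $\bar\omega^\mu_0$: the Hardy--Littlewood inequality gives $\int_D\ln\tfrac1{|x^\mu-y|}\omega^\mu(y)\,dy\le\mu\int_{B_s(x^\mu)}\ln\tfrac1{|x^\mu-y|}\,dy=\tfrac12-\ln s$, and Riesz's rearrangement inequality (\cite{LL}) gives $\int_D\int_D\ln\tfrac1{|x-y|}\omega^\mu(x)\omega^\mu(y)\,dx\,dy\le\mu^2\int_{B_s(0)}\int_{B_s(0)}\ln\tfrac1{|x-y|}\,dx\,dy=-\ln s-\tfrac{a_0}{\pi^2}$. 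The regular parts are handled by the concentration of $\omega^\mu$: since $x^\mu\to x^*$ and, by Lemma~\ref{88}, $\int_{B_{\rho^\mu}(x^\mu)}\omega^\mu\to1$ with $\rho^\mu\to0$, and since $h$ is bounded below on $D\times D$, one obtains $\int_D\int_D h\,\omega^\mu\omega^\mu\ge2H(x^*)-o(1)$ and $\int_D h(x^\mu,y)\omega^\mu(y)\,dy\ge2H(x^*)-o(1)$, while $H(x^\mu)\to H(x^*)$ by continuity. Collecting these bounds, the right-hand side is $\le-\tfrac3{4\pi}\ln s+K-4H(x^*)+o(1)$, with the \emph{same} constant $K$.

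Substituting the two estimates into the inequality of Lemma~\ref{51}, the divergent terms $-\tfrac3{4\pi}\ln s$ and the constants $K$ cancel, leaving $-4H(x_0)+o(1)\le-4H(x^*)+o(1)$; letting $\mu\to+\infty$ along the chosen subsequence gives $H(x^*)\le H(x_0)$. Since $x_0\in D$ was arbitrary and $H\to+\infty$ on $\partial D$ (so that $H$ attains its infimum over $D$), we conclude $H(x^*)=\min_{x\in D}H(x)$. The crux of the argument — and the only delicate point — is that the $O(1)$ error terms on the two sides must cancel \emph{exactly}: this is why one must bound the singular parts of the energy of $\omega^\mu$ from above by the sharp model-patch rearrangement value rather than by any cruder estimate, and why one must extract from Lemma~\ref{88} not merely the boundedness of the regular energies but their precise limit $2H(x^*)+o(1)$.
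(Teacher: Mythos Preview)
Your proposal is correct and follows essentially the same approach as the paper: apply Lemma~\ref{51} with the model patch $\mu I_{B_s(x_0)}$, use Riesz's rearrangement inequality so that the Newtonian (logarithmic) terms on the two sides cancel exactly, and use Lemma~\ref{88} to identify the limits of the regular ($h$-)terms as $2H(x_0)$ and $2H(x^*)$, yielding $H(x^*)\le H(x_0)$. The paper's own proof is identical in structure but terser---it simply writes the expanded inequality and states ``taking the limit, by rearrangement inequality we obtain $H(x_1)\ge H(x^*)$''---whereas you have spelled out the precise cancellation of the $O(1)$ constants coming from the rearrangement bounds, which is indeed the only delicate point.
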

\begin{proof}
Since $H=+\infty$ on $\partial D$, there exists $x_1$ such that $H(x_1)=\min_{x\in D}H(x)$. It suffices to show $H(x_1)\geq H(x^*)$.
Define $\bar{\omega}^\mu=\mu I_{B_s(x_1)}\in \mathcal{N}^\mu$, then by Lemma \ref{51}
\begin{equation}
E(\bar{\omega}^\mu)+G*\bar{\omega}^\mu(x_1)-H(x_1)\leq E(\omega^\mu)+G*\omega^\mu(x^\mu)-H(x^\mu),
\end{equation}
that is,
\begin{equation}\label{444}
\begin{split}
&-\frac{1}{4\pi}\int_D\int_D\ln|x-y|\bar{\omega}^\mu(x)\bar{\omega}^\mu(y)dxdy-\frac{1}{2}\int_D\int_Dh(x,y)\bar{\omega}^\mu(x)\bar{\omega}^\mu(y)dxdy\\
&-\frac{1}{2\pi}\int_D\ln|x_1-y|\bar{\omega}^\mu(y)dy-\int_Dh(x_1,y)\bar{\omega}^\mu(y)dy-H(x_1)\\
\leq&-\frac{1}{4\pi}\int_D\int_D\ln|x-y|{\omega}^\mu(x){\omega}^\mu(y)dxdy-\frac{1}{2}\int_D\int_Dh(x,y){\omega}^\mu(x){\omega}^\mu(y)dxdy\\
&-\frac{1}{2\pi}\int_D\ln|x^\mu-y|{\omega}^\mu(y)dy-\int_Dh(x^\mu,y){\omega}^\mu(y)dy-H(x^\mu)
\end{split}
\end{equation}
Taking the limit in \eqref{444}, by rearrangement inequality we obtain
\begin{equation}
H(x_1)\geq H(x^*),
\end{equation}
which completes the proof.

\end{proof}
\begin{proof}[Proof of Theorem \ref{90}]
By choosing $r^\mu=\rho^\mu+|x^\mu-x^*|$, Theorem \ref{90} is an easy consequence of Lemma \ref{88} and Lemma \ref{1000}.
\end{proof}
\begin{remark}
There may be a better convergence for $\omega^\mu$, that is, the support of $\omega^\mu$ shrinks to $x^*$ as $\mu\rightarrow+\infty$, but we have not yet proved this. The main difficulty to estimate the size of $supp(\omega^\mu)$ is that the mutual interaction energy between the background vorticity and the point vortex is very large, and energy estimate does not provide enough information anymore.
\end{remark}

\smallskip

{\bf Acknowledgements:}
{\it D. Cao was supported by NNSF of China (grant No. 11331010) and Chinese Academy of Sciences by
 grant QYZDJ-SSW-SYS021. G. Wang was supported by  NNSF of China (grant No.11771469).

}

\renewcommand\refname{References}
\renewenvironment{thebibliography}[1]{%
\section*{\refname}
\list{{\arabic{enumi}}}{\def\makelabel##1{\hss{##1}}\topsep=0mm
\parsep=0mm
\partopsep=0mm\itemsep=0mm
\labelsep=1ex\itemindent=0mm
\settowidth\labelwidth{\small[#1]}%
\leftmargin\labelwidth \advance\leftmargin\labelsep
\advance\leftmargin -\itemindent
\usecounter{enumi}}\small
\def\newblock{\ }
\sloppy\clubpenalty4000\widowpenalty4000
\sfcode`\.=1000\relax}{\endlist}
\bibliographystyle{model1b-num-names}

\end{document}